\newcommand{\R}{\mathbb{R}}
\newcommand{\T}{\mathcal{T}}
\newcommand{\LOD}{\mbox{\tiny{LOD}}}
\newcommand{\oVf}{\mathring{W}_{h}}
\newcommand{\Ic}{I_H}
\newtheorem{theorem}{Theorem}[section]
\newtheorem{lemma}[theorem]{Lemma}
\newtheorem{problem}[theorem]{Problem}
\theoremstyle{definition}
\newtheorem{definition}[theorem]{Definition}
\newtheorem{remark}[theorem]{Remark}
\newtheorem{conclusion}[theorem]{Conclusion} %
\begin{document}

\begin{center}
{\LARGE Localized orthogonal decomposition techniques for boundary value problems\renewcommand{\thefootnote}{\fnsymbol{footnote}}\setcounter{footnote}{0}
 \hspace{-3pt}\footnote{This work was supported by The G\"{o}ran Gustafsson Foundation and The Swedish Research Council.}}\\[2em]
\end{center}

\renewcommand{\thefootnote}{\fnsymbol{footnote}}
\renewcommand{\thefootnote}{\arabic{footnote}}

\begin{center}
{\large Patrick Henning\footnote[1]{ANMC, Section de Math\'{e}matiques, \'{E}cole polytechnique f\'{e}d\'{e}rale de Lausanne, 1015 Lausanne, Switzerland},
Axel M\r{a}lqvist\footnote[2]{Department of Mathematical Sciences, Chalmers University of Technology and University of Gothenburg, SE-412 96 G\"oteborg, Sweden}}\\[2em]
\end{center}

\begin{center}
{\large{\today}}
\end{center}

\begin{center}
\end{center}

\begin{abstract}
In this paper we propose a Local Orthogonal Decomposition method (LOD) for elliptic partial differential equations with inhomogeneous Dirichlet- and Neumann boundary conditions. For this purpose, we present new boundary correctors which preserve the common convergence rates of the LOD, even if the boundary condition has a rapidly oscillating fine scale structure. We prove a corresponding a-priori error estimate and present numerical experiments. We also demonstrate numerically that the method is reliable with respect to thin conductivity channels in the diffusion matrix. Accurate results are obtained without resolving these channels by the coarse grid and without using patches that contain the channels.
\end{abstract}

\paragraph*{Keywords}
finite element method, a priori error estimate, mixed boundary conditions, multiscale method, LOD, upscaling

\paragraph*{AMS subject classifications}
35J15, 65N12, 65N30

\section{Introduction}
In this work we consider linear elliptic problems with a high variable diffusion matrix and possibly high variable Dirichlet and Neumann boundary conditions. Such problems are typically referred to as multiscale problems and arise in various applications, such as simulations of groundwater flow. Due to the large size of the computational domains and the rapid variations in the diffusivity which must be resolved by the computational grid, tremendous computing effort is needed. Such problems cannot be handled by standard finite element or finite volume methods.

To overcome these difficulties, a large number of so called multiscale methods have been proposed in the last decades (see e.g. \cite{Abdulle:2005,Abdulle:E:Engquist:Vanden-Eijnden:2012,MR2801210,E:Engquist:2003,Efendiev:Galvis:Hou:2013,Gloria:2011,Henning:Ohlberger:2009,Hou:Wu:1997,Owhadi:Zhang:Berlyand:2013,MR1660141,Larson:Malqvist:2007,Malqvist:2011,Ohlberger:2005,Owhadi:Zhang:2011}). In this work, we focus on the Local Orthogonal Decomposition Method (LOD) that was originally introduced in \cite{MaPe12} and that was derived from the Variational Multiscale Method (VMM) framework (c.f. \cite{Hughes:1995,Malqvist:2011}). 

The essence of the LOD is to construct a low dimensional solution space (with a locally supported partition of unity basis) that exhibits very high $H^1$-approximation properties with respect to the exact solution that we are interested in. The construction of the space does not rely on regularity or any structural assumptions on the type or the speed of the variations in the data functions. Advantages are therefore that the method does not rely on classical homogenization settings, but that it is also justified if no scale separation is available. The approach is fully robust against the variations in the diffusion matrix $A$. Furthermore, as shown in the numerical experiments, the method even shows a good behavior for high contrast cases and conductivity channels. Such structures typically have to be resolved with the coarse grid, but it is not necessary for the LOD. Like for most other multiscale methods, another advantage is that the constructed space (i.e. the computed correctors) can be fully reused for different source terms and different boundary conditions (in the latter case, only the boundary correctors have to be recomputed). This particularly pays off in various nonlinear settings, where the constructed space has to be computed only once, but can be reused in every iteration step of the nonlinear solver (see e.g. \cite{Henning:Malqvist:Peterseim:2012,Henning:Malqvist:Peterseim:2013}).

The fundamental idea of the LOD is to start from two computational grids: a coarse grid and a fine grid that resolves all fine scale features from the data functions. Accordingly, there are two corresponding finite element spaces, a coarse space $V_H$ and a very high dimensional space $V_h$. Introducing a quasi-interpolation operator $I_H : V_h \rightarrow V_H$, it is possible to define an (again high dimensional) remainder space $W_h$ that is just the kernel of the operator $I_H$. The orthogonal complement of $W_h$ in $V_h$ with respect to the energy scalar product is a low dimensional space with very high approximation properties (we refer to this space as the 'multiscale space' $V^{ms}$). With this strategy, it is possible to split the high dimensional finite element space $V_h$ into the orthogonal direct sum of a low dimensional multiscale space $V^{ms}$ and a high dimensional remainder space $W_h$. The final problem is solved in the low dimensional space $V^{ms}$ and is therefore cheap. However, the construction of the exact splitting of $V_h=V^{ms}\oplus W_h$ is computationally expensive and, therefore, must be somehow localized to make the method useful. In fact, localization is possible since the canonical basis functions of the multiscale space $V^{ms}$ show an exponential decay to zero outside of the support of the coarse finite element basis functions of $V_H$. A first localization strategy was proposed and analyzed in \cite{MaPe12}. Here, localized multiscale basis functions are determined by computing orthogonal complements of coarse basis functions in localized patches. This strategy has been recently applied to semi-linear multiscale problems \cite{Henning:Malqvist:Peterseim:2012}, eigenvalue problems \cite{Malqvist:Peterseim:2012}, the computation of ground states of Bose-Einstein condensates \cite{Henning:Malqvist:Peterseim:2013} and it was also combined with a discontinues Galerkin approach in \cite{Elverson:Georgoulis:Malqvist:Peterseim:2012,Elverson:Georgoulis:Malqvist:2013}. However, the localization strategy also suffers from a pollution of the exponential decay by the factor $1/H$, where $H$ denotes the coarse mesh size. This pollution is numerically visible and leads to larger patches for the localization problems. In \cite{Henning:Peterseim:2012}, motivated by homogenization theory, a different localization strategy was proposed, which successfully avoids the pollution effect and practically leads to much smaller patch sizes, which can be confirmed in numerical experiments. However, the localization proposed in \cite{Henning:Peterseim:2012} was given in a very specific formulation which is only adequate for finite element spaces consisting of piecewise affine functions on triangular meshes. In this paper we will close this gap by proposing a strategy that does not suffer from the mentioned pollution and that 
is applicable
to arbitrary coarse spaces $V_H$.

So far, inhomogeneous and mixed boundary conditions have been ignored in the construction and analysis of the LOD. High aspect ratios and channels have also not been studied in a systematic way.
In this work we extend and investigate the LOD further by:
\begin{enumerate}
\item introducing new boundary correctors that allow for an efficient treatment of inhomogeneous possibly oscillating Dirichlet and Neumann boundary conditions and
\item investigating the question of how the method reacts to high conductivity channels.
\end{enumerate}
These aspects are crucial for many multiscale applications, such as porous media flow where the porous medium might be crossed by cracks. Typically, these kind of structures have to be resolved with the coarse mesh in order to get accurate approximations. We will see that this is not necessary for the LOD. The approach that we propose will particularly generalize the localization strategy of \cite{Henning:Peterseim:2012}.
The new approach will no longer be restricted to triangular meshes and it will be also clear how the method generalizes to finite element spaces consisting of piecewise polynomials of a higher degree.

The general setting of this work is introduced in Section \ref{section-notation}, the LOD for boundary value problems is proposed in Section \ref{section-lod}, and detailed numerical experiments are given in Section \ref{section-numerical-experiments}.

\section{General setting and notation}
\label{section-notation}

In this paper, we consider a linear elliptic diffusion problem with mixed Dirichlet and Neumann boundary conditions, i.e. find $u$ with:
\begin{equation*}\label{eq:model}
  \begin{aligned}
    -\nabla \cdot A\nabla u &= f\quad \hspace{6pt}\text{in }\Omega, \\
    u & = g \quad \hspace{7pt}\text{on }\Gamma_D, \\
    A \nabla u \cdot n &= q \quad \hspace{7pt}\text{on }\Gamma_N,
  \end{aligned}
\end{equation*}
where
\begin{itemize}
\item[(A1)] $\Omega\subset\mathbb{R}^{d}$, for $d=1,2,3$, denotes a bounded Lipschitz domain with a piecewise polygonal boundary that is divided into two pairwise disjoint Hausdorff measurable submanifolds $\Gamma_{D}$ and $\Gamma_N$ with $\Gamma_D \cup \Gamma_N = \partial \Omega$ and $\Gamma_D$ being a closed set of non-zero Hausdorff measure of dimension $d-1$. By $n$ we denote the outward-pointing normal on $\partial \Omega$.
\item[(A2)] $f \in L^2(\Omega)$ denotes a given source term, $g \in H^{\frac{1}{2}}(\Gamma_D)$ the Dirichlet boundary values, and $q \in L^2(\Gamma_N)$ the Neumann boundary values.
\item[(A3)] $A\in L^\infty(\Omega,\mathbb{R}^{d\times d}_{sym})$ is a symmetric matrix-valued coefficient with uniform spectral bounds $\beta\geq\alpha>0$, 
\begin{equation}\label{e:spectralbound}
\sigma(A(x))\subset [\alpha,\beta]\quad\text{for almost all }x\in \Omega.
\end{equation}
\end{itemize}
Let $T_D : H^1(\Omega) \rightarrow H^{\frac{1}{2}}(\Gamma_D)$ denote trace operator with respect to $\Gamma_D$ and define the space $H^1_{\Gamma_D}(\Omega):=\{ v \in H^1(\Omega)| \hspace{2pt} T_D(v)=0\}$. Then, by the Lax-Milgram Theorem, there exists a unique weak solution of problem (\ref{eq:model}) above, i.e. $u \in H^1(\Omega)$ with $T_D(u)=g$ and
\begin{align*}
\int_{\Omega} A \nabla u \cdot \nabla \phi = \int_{\Omega} f \phi + \int_{\Gamma_N} q \phi \quad \mbox{for all } \phi \in H^1_{\Gamma_D}(\Omega). 
\end{align*}

In order to discretize the above problem, we consider two different shape-regular conforming triangulations/quadrilations $\T_H$ and $\T_h$ of $\Omega$. For instance, for $d=2$, both $\T_H$ and $\T_h$ consist either of triangles or 
quadrilaterals
and for $d=3$, both $\T_H$ and $\T_h$ consist either of 
tetrahedrons or 
hexahedrons.
We assume that $\T_h$ is a, possibly non-uniform, refinement of $\T_H$. By $H$ we denote the maximum diameter of an element of $\T_H$ and by $h\le H/2$ the maximum diameter of an element of $\T_h$. Together with $h\le H/2$, we also assume that $\T_H$ was at least one time globally (uniformly) refined to generate $\T_h$ (otherwise the usage of our approach does not make sense). The 'coarse scale' partition $\T_H$ is arbitrary whereas the 'fine scale' partition $\T_h$ is connected to the problem in the sense that we assume that the grid fully resolves the variations in the coefficients $A$ and $g$. For $\T=\T_H,\T_h$ we denote
\begin{align*}
P_1(\T) := \{v &\in C^0(\Omega) \;\vert \\
&\;\forall T\in\T: \hspace{2pt} v\vert_T \text{ is a polynomial of total (resp. partial) degree}\leq 1\}.
\end{align*}
We define $V_h:=P_1(\T_h)\cap H^1_{\Gamma_D}(\Omega)$ to be the 'high resolution' finite element space and $V_H:=P_1(\T_H)\cap H^1_{\Gamma_D}(\Omega) \subset V_h$ to be the coarse space. By $\mathcal{N}_H$ we denote the set of Lagrange points of the coarse grid $\T_H$ and by $\mathcal{N}_h$ we denote the set of the Lagrange points of the fine grid $\T_h$. For simplification, we assume that $\overline{\Gamma}_D \cap \overline{\Gamma}_N \subset \mathcal{N}_H$ (i.e. there is always a node on the interface between the Dirichlet and Neumann boundary segments).

In the following, the notation $a \lesssim b$ stands for $a\leq Cb$ with some constant $C$ that can depend on the space dimension $d$, $\Omega$, $\alpha$, $\beta$ and interior angles of the triangulations, but not on the mesh sizes $H$ and $h$. In particular it does not depend on the possibly rapid oscillations in $A$, $g$, $q$ and $f$.

\subsection{Reference problem}
We now define the fine scale reference problem. In the following, we do not compare the error between the exact solution and the LOD approximation (that we introduce in the next section), but always the error between LOD approximation and a fine scale reference solution. First, we need an approximation of the Dirichlet boundary condition: for each $z \in \mathcal{N}_h \cap \Gamma_D$ and $B_{\epsilon}(z)$ denoting a ball with radius $\epsilon$ around $z$, we define
$$g_z:=\lim_{\epsilon \rightarrow 0} \int_{\Gamma_D \cap B_{\epsilon}(z)} \hspace{-43pt}-\hspace{10pt} g\hspace{20pt}.$$
If $g$ is continuous we have $g_z=g(z)$. Now, let $g_H \in P_1(\mathcal{T}_H)$ be the function that is uniquely determined by the nodal values $g_H(z)=g_z$ for all $z \in \mathcal{N}_H \cap \Gamma_D$ and $g_H(z)=0$ for all $z \in \mathcal{N}_H \setminus \Gamma_D$. Using this, we define the (fine scale) Dirichlet extension $g_h \in P_1(\mathcal{T}_h)$ uniquely by the nodal values $g_h(z)=g_z$ for all $z \in \mathcal{N}_h \cap \Gamma_D$ and $g_h(z)=g_H(z)$ for all $z \in \mathcal{N}_h \setminus \Gamma_D$. With this, we avoid degeneracy of $g_h$ for $h$ tending to zero. The reference problem reads: find $v_h \in V_h$ with
\begin{align}
\label{reference-problem-eq} \int_{\Omega} A \nabla v_h \cdot \nabla \phi_h = \int_{\Omega} f \phi_h - \int_{\Omega} A \nabla g_h \cdot \nabla \phi_h +\int_{\Gamma_N} q \phi_h \quad \mbox{for all } \phi_h \in V_h. 
\end{align}
Define the final fine scale approximation by $u_h:=v_h+g_h$.

\section{Local Orthogonal Decomposition (LOD)}
\label{section-lod}

\subsection{Orthogonal Decomposition}
Let $\mathring{\mathcal{N}}_{H}:=\mathcal{N}_H \setminus \Gamma_D$ be the set of free coarse nodes.
For $z \in \mathcal{N}_H$ we let $\Phi_z \in V_H$ denote the corresponding nodal basis function with $\Phi_z(z)=1$ and $\Phi_z(y)=0$ for all $y\in \mathcal{N}_H \setminus \{ z\}$. We define a weighted Cl\'ement-type quasi-interpolation operator (c.f. \cite{MR1736895,MR1706735})
\begin{align}
\label{def-weighted-clement} I_H : H^1_{\Gamma_D}(\Omega) \rightarrow V_H,\quad v\mapsto I_H(v):= \sum_{z \in \mathring{\mathcal{N}}_{H}} v_z \Phi_z \quad \text{with }v_z := \frac{(v,\Phi_z)_{L^2(\Omega)}}{(1,\Phi_z)_{L^2(\Omega)}}.
\end{align}
Using that the operator $(I_H)_{|V_H}: V_H \rightarrow V_H$ is an isomorphism (see \cite{MaPe12}), we can define 
$W_h := \{ v_h \in V_h | \hspace{2pt} I_H(v_h) = 0 \}$ to
construct a splitting of the space $V_h$ into the direct sum
\begin{align}
\label{def-W_h}V_h = V_H  \oplus W_h,
\quad \mbox{where} \enspace \underset{\in V_h}{\underbrace{v_h}} = \underset{\in V_H}{\underbrace{({I_H}\vert_{V_H})^{-1}({I_H}(v_h))}} + \underset{\in W_h}{\underbrace{v_h - ({I_H}\vert_{V_H})^{-1}({I_H}(v_h))}}.
\end{align}
The subspace $W_h$ contains the fine scale features in $V_h$ that cannot be captured by the coarse space $V_H$. However, the fact that $W_h$ is the kernel of an interpolation operator suggests that the features of the (high dimensional) space $W_h$ could be neglected. Consequently we can look for a splitting $V_h = V_H^{\mbox{\tiny\rm new}} \oplus W_h$, where $V_H^{\mbox{\tiny\rm new}}$ has high $H^1$-approximation properties to the solution of the multiscale problem, but where
$V_H^{\mbox{\tiny\rm new}}$ is low dimensional because dim$(V_H^{\mbox{\tiny\rm new}})=$dim$(V_h)-$dim$(W_h)=$dim$(V_H)$. In order to explicitly construct such a splitting, we look for the orthogonal complement of $W_h$ in $V_h$ with respect to the scalar product $( A \nabla \cdot, \nabla \cdot )_{L^2(\Omega)}$. The corresponding orthogonal projection $P_{A,h}: V_h \rightarrow W_h$ is given by: for $v_h\in V_h$, $P_{A,h}(v_h)\in W_h$ solves
\begin{equation*}\label{e:finescaleproj}
( A\nabla P_{A,h}(v_h),\nabla w_h )_{L^2(\Omega)} =(  A\nabla v_h,\nabla w_h )_{L^2(\Omega)} \quad\text{for all }w_h \in W_h.
\end{equation*}
Observe that we have $(1-P_{A,h})(V_h)=(1-P_{A,h})(V_H)$ since $V_h = V_H \oplus W_h$ and $(1-P_{A,h})(W_h)=0$. We can therefore define
\begin{align}
\label{definition-of_V-H-c}V_H^c :=(1-P_{A,h})(V_H)
\end{align}
to obtain the desired splitting
\begin{align*}
V_h = \mbox{kern}(P_{A,h}) \oplus W_h = (1\hspace{-2pt}-\hspace{-2pt}P_{A,h})(V_h) \oplus W_h {=} (1\hspace{-2pt}-\hspace{-2pt}P_{A,h})(V_H) \oplus W_h = V_H^c \oplus W_h.
\end{align*}
Observe that this splitting can be equivalently characterized by a localized operator $\mathbf{Q}_h^{T} : V_h \rightarrow W_h$ with
$\mathbf{Q}_h^{T}(v_h) \in W_h$ solving
\begin{align}
\label{optimal-global-corrector}\int_{\Omega} A \nabla \mathbf{Q}_h^{T}(\phi_h)\cdot \nabla w_h = - \int_T A \nabla \phi_h \cdot \nabla w_h \qquad \mbox{for all } w_h \in W_h.
\end{align}
In this case we obtain that $P_{A,h} = - \sum_{T \in \mathcal{T}_H} \mathbf{Q}_h^{T}$. Since $\mathbf{Q}_h^{T}(\phi_h)$ decays rapidly to zero outside of $T$ (allowing to replace $\Omega$ by some small environment of $T$), the above reformulation of $P_{A,h} = - \sum_{T \in \mathcal{T}_H} \mathbf{Q}_h^{T}$ will be the basis for constructing a suitable localized version of the splitting $V_h = V_H^c \oplus W_h$. This will be done in the next subsection.

\subsection{Localization and formulation of the method}

In order to localize the 'detail space' $W_h$, we use admissible patches. We call this restriction to patches {\it localization}.

\begin{definition}[Admissible patch]\label{def-admissible-patches}
For $T \in \T_H$, we call $U(T)$ an {\it admissible patch} of
$T$, if it is non-empty, open, and connected, if $T \subset U(T) \subset \Omega$ and if
it is the union of the closure of elements of $\T_h$, i.e.
\begin{align*}
    U(T) = \operatorname{int}\bigcup_{{\tau} \in \T_h^{\ast}} \overline{\tau}, \quad \mbox{where} \enspace
    \T_h^{\ast} \subset \T_h.
\end{align*}
\end{definition}
By $\mathcal{U}$ we denote a given set of admissible localization patches, i.e.
\begin{align*}
 \mathcal{U} := \{ U(T)\;|\; \hspace{2pt} T \in \T_H \enspace \mbox{and } U(T) \enspace \mbox{is an admissible patch}\},
\end{align*}
where $\mathcal{U}$ contains one and only one patch $U(T)$ for each $T \in \T_H$. Throughout the paper, we refer to the set $U(T) \setminus T$ as an {\it extension layer}. Now, for any given admissible patch $U(T) \subset \Omega$ we define the restriction of $W_h$ to $U(T)$ by 
$$\mathring{W}_h(U(T)):=\{ v_h \in W_h| \hspace{2pt} v_h=0 \enspace \mbox{in } \Omega \setminus U(T) \}.$$ With this, we are prepared to define the local orthogonal decomposition method:

\begin{definition}[LOD approximation for boundary value problems]$\\$
\label{definition-loc-lod-approx}For a given set $\mathcal{U}$ of admissible patches, we define the local correction operator $Q_h^T: V_h \rightarrow \mathring{W}(U(T))$ by: for a given $\phi_h \in V_h$ and $T\in \T_H$ find $Q_h^{T}(\phi_h) \in \mathring{W}_h(U(T))$ such that
\begin{align}
\label{local-corrector-problem}\int_{U(T)} A \nabla Q_h^{T}(\phi_h)\cdot \nabla w_h = - \int_T A \nabla \phi_h \cdot \nabla w_h \qquad \mbox{for all } w_h \in \mathring{W}_h(U(T)).
\end{align}
The Neumann boundary correctors are given by: for all $T\in \T_H$ with $T \cap \Gamma_N\neq \emptyset$ find $B_{h}^{T} \in \mathring{W}_h(U(T))$ such that
\begin{align}
\label{local-boundary-corrector-problem}\int_{U(T)} A \nabla B_{h}^{T} \cdot \nabla w_h = - \int_{T\cap \Gamma_N} q w_h \qquad \mbox{for all } w_h \in \mathring{W}_h(U(T)).
\end{align}
The global correctors are given by
\begin{align*}
Q_h(\phi_h):=\sum_{T\in \T_H} Q_h^{T}(\phi_h) \quad \mbox{and} \quad B_{h}:=\underset{T\cap \Gamma_N\neq \emptyset}{\sum_{T\in \T_H}} B_{h}^{T}.
\end{align*}
Defining $R_h:=\mbox{Id} + Q_h$, the LOD approximation is given by $u_{\LOD}:=R_h(v_H + g_h)-B_{h}$, where $v_H \in V_H$ solves:
\begin{eqnarray}
\label{lod-problem-eq} \lefteqn{\int_{\Omega} A \nabla R_h(v_H) \cdot \nabla R_h(\Phi_H)}\\
\nonumber&=& \int_{\Omega} f  R_h(\Phi_H) - \int_{\Omega} A \nabla( R_h(g_h)-B_{h})\cdot \nabla R_h(\Phi_H) +\int_{\Gamma_N} q R_h(\Phi_H)
\end{eqnarray}
for all $\Phi_H \in V_H$.
\end{definition}
That problem (\ref{lod-problem-eq}) is well-posed follows by the Lax-Milgram theorem in the Hilbert space $X=\{ R_h(\Phi_H)| \Phi_H \in V_H \}$ and the fact that $\Phi_H= I_H( R_h(\Phi_H) )$ for all $\Phi_H \in V_H$.
\begin{remark}[Interpretation of the method for $U(T)=\Omega$]
Recall the definition of $V_H^c$ (see (\ref{definition-of_V-H-c})) and assume that $g=0$, $q=0$ and $U(T)=\Omega$ for all $T \in \mathcal{T}_H$. Then, $u_{\LOD} \in V_H^c$ is the unique solution of
\begin{align*}
\int_{\Omega} A \nabla u_{\LOD} \cdot \nabla \Phi = \int_{\Omega} f  \Phi \qquad  \mbox{for all } \Phi \in V_H^c.
\end{align*}
Furthermore, we have $u_h-u_{\LOD} \in \mbox{\rm{kern}}(I_H)=W_h$ and the explicit relation
$$u_{\LOD}= \left((1 - P_{A,h}) \circ ({I_H}\vert_{V_H})^{-1} \circ {I_H}\right) (u_h).$$
\end{remark}

Practically, using the fact that the basis functions of $V_H$ have a partition of unity property, we need to solve the local corrector problem \eqref{local-corrector-problem} only $d \cdot |\T_H|$ times in the case of a triangulation and $(d+1) \cdot |\T_H|$ times in the case of a quadrilation. Additionally, we need to determine the corrector $Q_h(g_h)$ which involves solving a local problem for each $T\in \T_H$ with $\overline{T}\cap \Gamma_D\neq \emptyset$.

Note that even though the method was defined for finite elements spaces of partial degree less than or equal to $1$, it directly generalizes to arbitrary polynomial degrees.

\subsection{Error estimate for the 'ideal' method}

Before presenting the result, we recall that the quasi-interpolation operator $I_H$ (defined in (\ref{def-weighted-clement})) is locally stable and fulfills the typical approximation properties (c.f. \cite{MR1736895,MR1706735}), i.e. there exists a generic constant $C$, depending on the shape regularity of $\T_H$ but not on the local mesh size $H_T:=\operatorname{diam}(T)$, such that for all $v\in H^1(\Omega)$ and for all $T\in\T_H$ it holds
\begin{equation}\label{e:interr}
 H_T^{-1}\|v-\Ic v\|_{L^{2}(T)}+\|\nabla(v-\Ic v)\|_{L^{2}(T)}\leq C \| \nabla v\|_{L^2(\omega_T)}.
\end{equation}
Here, we denote $\omega_T:=\cup\{K\in\T_H\;\vert\;K\cap T\neq\emptyset\}$. The approximation and stability properties of the Cl\'ement-type quasi-interpolation operator were shown in \cite{MR1706735}, but only for triangular meshes. In \cite{MR1736895} they are also proved for quadrilateral meshes but in this latter work the weights $v_z$ in (\ref{def-weighted-clement}) are slightly modified to account for boundary corrections. However, from the proofs in \cite{MR1736895,MR1706735} it is clear, that estimate (\ref{e:interr}) (as it can be found in \cite{MR1706735}) directly generalizes to quadrilateral meshes.

The following theorem guarantees that, in the ideal (but impractical) case of no localization (i.e. full sampling $U(T)=\Omega$), the proposed LOD method preserves the common linear order convergence for the $H^1$-error without suffering from pre-asymptotic effects due to the rapid variations in $A$.

\begin{theorem}[A priori error estimate for $U(T)=\Omega$]
\label{convergence-theorem-max-undersampling}Assume (A1)-(A3) and $U(T)=\Omega$ for all $T\in\T_H$. If $u_h$ denotes the solution of the reference problem (\ref{reference-problem-eq}) and $u_{\LOD}$ the corresponding LOD approximation given by Definition \ref{definition-loc-lod-approx}, then it holds
\begin{align}
\label{convergence-equation-max-undersampling}\|  u_{\LOD} - u_h \|_{H^1(\Omega)} \lesssim H \| f \|_{L^2(\Omega)}.
\end{align}
\end{theorem}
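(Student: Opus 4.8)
The plan is to work entirely with the $A$-orthogonal splitting $V_h = V_H^c \oplus W_h$ behind \eqref{definition-of_V-H-c}, to prove that the error $e := u_h - u_{\LOD}$ lies \emph{completely} in the remainder space $W_h$, and then to convert the resulting energy identity into the claimed $H^1$-bound via the interpolation estimate \eqref{e:interr}. First I would note that, because $U(T)=\Omega$, the local corrector problems \eqref{local-corrector-problem} reduce to the global ones \eqref{optimal-global-corrector}, so $Q_h = -P_{A,h}$ and $R_h = \mathrm{Id}+Q_h = 1-P_{A,h}$. Two facts will be used repeatedly: for every admissible right-hand side (each $\phi\in V_h$, and also $\phi = g_h$, reading \eqref{local-corrector-problem} with $g_h$ on the right) one has $R_h(\phi)\in H^1_{\Gamma_D}(\Omega)$ with $(A\nabla R_h(\phi),\nabla w_h)_{L^2(\Omega)}=0$ for all $w_h\in W_h$; and $(1-P_{A,h})(V_h)=(1-P_{A,h})(V_H)=V_H^c$. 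In particular $R_h(v_H)\in V_H^c$, both $R_h(v_H)$ and $R_h(g_h)$ are $A$-orthogonal to $W_h$, and $B_h\in W_h$ satisfies $(A\nabla B_h,\nabla w_h)_{L^2(\Omega)}=-(q,w_h)_{L^2(\Gamma_N)}$ for all $w_h\in W_h$ (sum \eqref{local-boundary-corrector-problem} over $T$).

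Next I would establish the Galerkin orthogonality. Inserting $u_h = v_h+g_h$ into \eqref{reference-problem-eq} gives $(A\nabla u_h,\nabla\phi_h)=(f,\phi_h)+(q,\phi_h)_{L^2(\Gamma_N)}$ for all $\phi_h\in V_h$, while rearranging \eqref{lod-problem-eq} and using $u_{\LOD}=R_h(v_H)+R_h(g_h)-B_h$ gives $(A\nabla u_{\LOD},\nabla R_h(\Phi_H))=(f,R_h(\Phi_H))+(q,R_h(\Phi_H))_{L^2(\Gamma_N)}$ for all $\Phi_H\in V_H$. Testing the first identity with $\phi_h=R_h(\Phi_H)\in V_h$ and subtracting, all right-hand sides are identical and cancel, so $(A\nabla e,\nabla v)_{L^2(\Omega)}=0$ for every $v\in V_H^c$. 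Since $e = v_h - R_h(v_H) - Q_h(g_h) + B_h \in V_h$, I split $e=(1-P_{A,h})e + P_{A,h}e$ with $(1-P_{A,h})e\in V_H^c$ and $P_{A,h}e\in W_h$; testing the orthogonality relation with $v=(1-P_{A,h})e$ and using the $A$-orthogonality of $V_H^c$ and $W_h$ yields $(A\nabla(1-P_{A,h})e,\nabla(1-P_{A,h})e)_{L^2(\Omega)}=0$, hence $(1-P_{A,h})e=0$ and $e=P_{A,h}e\in W_h$, i.e.\ $I_H e = 0$.

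Finally I would extract the energy identity. Because $R_h(v_H),R_h(g_h)$ are $A$-orthogonal to $e\in W_h$ and $(A\nabla B_h,\nabla e)=-(q,e)_{L^2(\Gamma_N)}$, one gets $(A\nabla u_{\LOD},\nabla e)=(q,e)_{L^2(\Gamma_N)}$; testing the reference identity for $u_h$ with $\phi_h=e$ gives $(A\nabla u_h,\nabla e)=(f,e)+(q,e)_{L^2(\Gamma_N)}$. Subtracting, the Neumann contributions cancel and $(A\nabla e,\nabla e)_{L^2(\Omega)}=(f,e)_{L^2(\Omega)}$. Now \eqref{e:interr} with $I_H e=0$, summed over $T\in\T_H$ and combined with the finite overlap of the neighborhoods $\omega_T$, yields $\|e\|_{L^2(\Omega)}\lesssim H\|\nabla e\|_{L^2(\Omega)}$; hence $\alpha\|\nabla e\|_{L^2(\Omega)}^2\le (A\nabla e,\nabla e)=(f,e)\le\|f\|_{L^2(\Omega)}\|e\|_{L^2(\Omega)}\lesssim H\|f\|_{L^2(\Omega)}\|\nabla e\|_{L^2(\Omega)}$, so $\|\nabla e\|_{L^2(\Omega)}\lesssim H\|f\|_{L^2(\Omega)}$, and a Poincaré--Friedrichs inequality on $H^1_{\Gamma_D}(\Omega)$ (valid since $\Gamma_D$ has positive measure) upgrades this to $\|e\|_{H^1(\Omega)}\lesssim H\|f\|_{L^2(\Omega)}$.

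The step I expect to require the most care is the bookkeeping that makes the inhomogeneous boundary data disappear: one must verify that $g_h$ enters $u_{\LOD}$ only through $R_h(g_h)$, which is $A$-orthogonal to the error and therefore invisible both in the Galerkin orthogonality and in the final energy identity, and that the Neumann corrector $B_h$ is engineered precisely so that $(A\nabla B_h,\nabla\,\cdot\,)$ reproduces the functional $(q,\cdot)_{L^2(\Gamma_N)}$ on all of $W_h$ --- it is exactly this cancellation that removes the boundary pollution and leaves the clean relation $(A\nabla e,\nabla e)=(f,e)$. Once $e\in W_h$ and that identity are in place, the rest is the standard LOD energy argument.
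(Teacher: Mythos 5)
Your proposal is correct and follows essentially the same route as the paper's proof: Galerkin orthogonality against $V_H^c=R_h(V_H)$ forces the error into $W_h$ (so $I_H e=0$), the corrector and boundary-corrector equations collapse the energy identity to $(A\nabla e,\nabla e)=(f,e)$, and the interpolation estimate \eqref{e:interr} delivers the factor $H$. Your phrasing via the projection $P_{A,h}$ and the explicit cancellation bookkeeping for $g_h$ and $B_h$ is just a more detailed rendering of the argument the paper gives.
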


\begin{proof}
Let $U(T)=\Omega$. Using (\ref{definition-of_V-H-c}) and the definition of the corrector operator $Q_h$ the $(A\nabla\cdot,\nabla\cdot)$-orthogonal complement of $W_h$ in $V_h$ is given by
\begin{align*}
V_H^c =(1-P_{A,h})(V_H) = (1+Q_h)(V_H) = \{ \Phi_H + Q_h(\Phi_H)| \hspace{2pt} \Phi_H \in V_H \}.
\end{align*}
With (\ref{reference-problem-eq}) and (\ref{lod-problem-eq}), we get for all $\Phi_H^c \in V_H^c$:
\begin{eqnarray*}
\lefteqn{\int_{\Omega} A \nabla \left( R_h(v_H) + Q_h(g_h) -  B_{h} \right) \cdot \nabla \Phi_H^c}\\
&\overset{(\ref{lod-problem-eq})}{=}& \int_{\Omega} f  \Phi_H^c - \int_{\Omega} A \nabla g_h \cdot \nabla \Phi_H^c +\int_{\Gamma_N} q \Phi_H^c \overset{(\ref{reference-problem-eq})}{=} \int_{\Omega} A \nabla v_h \cdot \nabla \Phi_H^c.
\end{eqnarray*}
Together with $V_h = V_H^c \oplus W_h$ and $V_H^c {\perp} W_h$ this implies $R_h(v_H) + Q_h(g_h) -  B_{h} - v_h \in W_h$ and therefore
\begin{align}
\label{error-in-remainder-space}I_H( R_h(v_H) + Q_h(g_h) -  B_{h} - v_h ) = 0.
\end{align}
Now, let $w_h \in W_h$ be arbitrary (which implies $I_H(w_h)=0$), we obtain
\begin{eqnarray*}
\lefteqn{\int_{\Omega} A \nabla \left( R_h(v_H) + Q_h(g_h) -  B_{h} - v_h \right) \cdot \nabla w_h} \\
&\overset{(\ref{reference-problem-eq})}{=}& \int_{\Omega} A \nabla \left( R_h(v_H) + R_h(g_h) \right) \cdot \nabla w_h  - \int_{\Omega} A \nabla  B_{h} \cdot \nabla w_h - \int_{\Omega} f w_h - \int_{\Gamma_N} q w_h \\
&\overset{(\ref{local-corrector-problem})}{=}& - \int_{\Omega} A \nabla B_{h} \cdot \nabla w_h - \int_{\Omega} f w_h - \int_{\Gamma_N} q w_h 
\overset{(\ref{local-boundary-corrector-problem})}{=} - \int_{\Omega} f w_h = \int_{\Omega} f ( I_H(w_h) - w_h ).
\end{eqnarray*}
Using (\ref{error-in-remainder-space}), we can choose $w_h = e_h := R_h(v_H) + Q_h(g_h) -  B_{h} - v_h$ to obtain:
\begin{eqnarray*}
\lefteqn{\| A^{1/2} \nabla \left( u_{\LOD} - u_h \right) \|_{L^2(\Omega)}^2
= \| A^{1/2} \nabla \left( R_h(v_H) + R_h(g_h) -  B_{h} - v_h - g_h\right) \|_{L^2(\Omega)}^2}\\
&=& \| A^{1/2} \nabla \left( R_h(v_H) + Q_h(g_h) -  B_{h} - v_h\right) \|_{L^2(\Omega)}^2\\
&=& \int_{\Omega} f ( I_H(e_h) - e_h ) \lesssim H \| f \|_{L^2(\Omega)} \|  \nabla e_h \|_{L^2(\Omega)} \\
&\lesssim& H \| f \|_{L^2(\Omega)} \| A^{1/2} \nabla \left( u_{\LOD} - u_h \right) \|_{L^2(\Omega)}.\hspace{150pt}
\end{eqnarray*}
\end{proof}

Assume again that $U(T)=\Omega$ for all $T\in \T_H$. Observe that by Theorem \ref{convergence-theorem-max-undersampling} we get
\begin{align}
\nonumber\| \nabla \left( R_h(v_H) + Q_h(g_h) -  B_{h} \right) \|_{L^2(\Omega)} &\le \| \nabla \left( u_{\LOD} - u_h\right)  \|_{L^2(\Omega)} +  \| \nabla u_h  \|_{L^2(\Omega)}\\
\label{energy-estimate-ideal}&\lesssim \|f\|_{L^2(\Omega)} + \|\nabla g_h \|_{L^2(\Omega)} + \| q \|_{L^2(\Gamma_N)},
\end{align}
with a constant independent of the variations in the data. By using the stability (\ref{e:interr}) of the quasi-interpolation operator $I_H$ the above estimate implies
\begin{eqnarray}
\nonumber
\| \nabla v_H \|_{L^2(\Omega)} &=& 
\| \nabla I_H \left( R_h(v_H) + Q_h(g_h) -  B_{h} \right) \|_{L^2(\Omega)}\\
\label{energy-estimate-ideal-2}&\overset{(\ref{energy-estimate-ideal})}{\lesssim}& \|f\|_{L^2(\Omega)} + \|\nabla g_h \|_{L^2(\Omega)} + \| q \|_{L^2(\Gamma_N)}.
\end{eqnarray}

\subsection{Error estimates for the localized method}

Theorem \ref{convergence-theorem-max-undersampling} gave us a first hint that the method is capable of preserving the usual convergence rates. However, the case of full sampling (i.e. $U(T)=\Omega$) is not computationally feasible, since the cost for solving one corrector problem would be identical to the cost of solving the original problem on the full fine scale. The key issue is therefore to find a 'minimum size' for the localization patches $U(T)$, so that we still preserve the rate obtained in Theorem \ref{convergence-theorem-max-undersampling}. Let us first specify what we understand by the notion 'patch size'.

\begin{definition}[Patch size]
\label{category-k}Let $U(T)$ be an admissible patch and let $x_{U(T)}\in U(T)$ denote the barycenter of the patch. We say that $U(T)$ is of category $m\in\mathbb{N}$ if
\begin{align*}
|x_{U(T)} - \bar{x} | \ge m |\log(H)| H \quad \mbox{for all } \bar{x} \in \partial U(T) \setminus \partial \Omega.
\end{align*}
\end{definition}
If $U(T)\cap \partial \Omega = \emptyset$, a category $m$ patch is nothing but a patch with diameter $ 2m |\log(H)| H$. The generalized definition above accounts for the fact that we know the correct boundary condition on $\partial \Omega$ and that we do not have to deal with a decay behavior there.

The following abstract lemma shows that any solution of a generalized corrector problem (with respect to $T\in \T_H$) exponentially decays to zero outside $T$. In order to quantify the decay with respect to the coarse grid, we introduce patches $U(T)$ that consist of $k$ coarse element layers attached to $T$ (i.e. $U(T)$ is a category $m=\lfloor k/|\log(H)|\rfloor$ patch). 

\begin{lemma}[Decay of local correctors]
\label{lemma-influence-intersections}
Let $k\in \mathbb{N}_{>0}$ be fixed. We define patches where the extension layer consists of a fixed number of coarse element layers. For all $T\in\T_H $, we define element patches in the coarse mesh $\T_H$ by
\begin{equation}\label{def-patch-U-k}
    \begin{aligned}
      U_0(T) & := T, \\
      U_k(T) & := \cup\{T'\in \T_H\;\vert\; T'\cap U_{k-1}(T)\neq\emptyset\}\quad k=1,2,\ldots .
    \end{aligned}
\end{equation}
Now, let $p_h^T \in W_h$ be the solution of
\begin{align}
\label{generalized-corrector-problem-2}\int_{\Omega} A \nabla p_h^T \cdot \nabla \phi_h =F_T(\phi_h) \qquad \mbox{for all } \phi_h \in W_h
\end{align}
where $F_T\in W_h^{\prime}$ is such that $F_T(\phi_h)=0$ for all $\phi_h \in \mathring{W}_h(\Omega \setminus T)$. Furthermore, we let $p_h^{T,k} \in \mathring{W}_h(U_k(T))$ denote the solution of
\begin{align}
\label{generalized-corrector-problem-localized}\int_{U_k(T)} A \nabla p_h^{T,k} \cdot \nabla \phi_h =F_T(\phi_h) \qquad \mbox{for all } \phi_h \in \mathring{W}_h(U_k(T)).
\end{align}
Then there exists a generic constant $0<\theta<1$ that depends on the contrast but not on $H$, $h$ or the variations of $A$ such that
\begin{eqnarray}
\label{equation-influence-intersections}\left\| \sum_{T\in\T_H} \nabla (p_h^T-p_h^{T,k})\right\|_{L^2(\Omega)}^2 \lesssim k^d
\theta^{2 k}
\sum_{T\in\T_H} \|\nabla p_h^T \|_{L^2(\Omega)}^2.
\end{eqnarray}
\end{lemma}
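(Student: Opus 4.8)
The plan is to prove the decay estimate by the standard Gloria--Peterseim-type iteration argument, working with a single corrector $p_h^T$ and showing geometric decay of its energy across coarse element layers, and then summing over $T$. First I would fix $T\in\T_H$ and introduce, for each annular region around $T$, a suitable cutoff function $\eta\in P_1(\T_H)$ taking the value $0$ on $U_{k-1}(T)$ (say) and $1$ outside $U_k(T)$, with $\|\nabla\eta\|_{L^\infty}\lesssim 1/H$ and $\eta$ supported away from $T$. The key algebraic identity will be obtained by testing \eqref{generalized-corrector-problem-2} with $\phi_h$ built from $p_h^T$ localized via $\eta$; since $F_T$ vanishes on $\mathring{W}_h(\Omega\setminus T)$ and the localized test function is supported away from $T$, the right-hand side drops out, leaving only commutator terms of the form $\int A\nabla p_h^T\cdot\nabla(\eta p_h^T)$ versus $\int \eta A\nabla p_h^T\cdot\nabla p_h^T$. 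The difficulty is that $\eta p_h^T\notin W_h$ in general, so one must correct it: replace $\eta p_h^T$ by $(1-I_H)(\eta p_h^T)$, which does lie in $W_h$, and control the extra interpolation term using \eqref{e:interr} together with the locality of $I_H$. This yields an inequality of the form $\|A^{1/2}\nabla p_h^T\|_{L^2(\Omega\setminus U_k(T))}^2 \lesssim \|A^{1/2}\nabla p_h^T\|_{L^2(U_k(T)\setminus U_{k-2}(T))}^2$, i.e.\ the energy on the ``far'' region is bounded by the energy on a single intermediate annulus.

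The second step converts this one-step estimate into geometric decay. Writing $a_j := \|A^{1/2}\nabla p_h^T\|_{L^2(\Omega\setminus U_j(T))}^2$, the above reads roughly $a_k \lesssim a_{k-2}-a_k$ (the annulus energy is a difference of tails), hence $a_k \le C_0(a_{k-2}-a_k)$ for a constant $C_0$ depending only on $\alpha,\beta$ and shape regularity, so $a_k \le \frac{C_0}{1+C_0}a_{k-2} =: \theta_0^2\, a_{k-2}$ with $0<\theta_0<1$ independent of $H,h$ and of the variations of $A$. Iterating gives $a_k \le \theta_0^{k}\,a_0 \lesssim \theta^{k}\|\nabla p_h^T\|_{L^2(\Omega)}^2$ after absorbing constants, i.e.\ the tail energy of $p_h^T$ outside $U_k(T)$ decays geometrically. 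From here the standard argument shows $\|\nabla(p_h^T-p_h^{T,k})\|_{L^2(\Omega)}^2\lesssim \theta^{2k}\|\nabla p_h^T\|_{L^2(\Omega)}^2$: indeed $p_h^T-p_h^{T,k}$ is (essentially) a Galerkin projection of $p_h^T$ onto an orthogonal complement inside $\mathring{W}_h(U_k(T))$ plus the ``missing'' tail; using a cutoff $p_h^T-\tilde\eta p_h^T$ as a competitor (again corrected by $(1-I_H)$ to stay in $\mathring{W}_h(U_k(T))$) and Galerkin optimality bounds the error energy by the energy of $p_h^T$ on a few layers near $\partial U_k(T)$, which is $\lesssim \theta^{2k}$ times the total.

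The final step is the summation over $T\in\T_H$. Each difference $p_h^T-p_h^{T,k}$ is supported in $\Omega\setminus U_{k-1}(T)$ (or more precisely has controllably small energy there), but more importantly its energy is concentrated in the region $U_{k+c}(T)\setminus U_{k-c}(T)$ for a fixed $c$; since every fixed element $\tau\in\T_h$ lies in only $\mathcal{O}(k^d)$ of the patches $U_{k}(T)$ as $T$ ranges over $\T_H$ (a coloring/finite-overlap argument exploiting shape regularity), the Cauchy--Schwarz inequality applied to $\|\sum_T \nabla(p_h^T-p_h^{T,k})\|_{L^2(\Omega)}^2$ produces exactly the factor $k^d$ in \eqref{equation-influence-intersections}, multiplying $\sum_T\|\nabla(p_h^T-p_h^{T,k})\|_{L^2(\Omega)}^2\lesssim\theta^{2k}\sum_T\|\nabla p_h^T\|_{L^2(\Omega)}^2$.

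I expect the main obstacle to be the bookkeeping around the cutoff function: since $\eta p_h^T$ and $\tilde\eta p_h^T$ are not in $W_h$, every test-function substitution must be accompanied by a correction $(1-I_H)(\cdot)$, and one has to verify carefully (i) that the corrected function still vanishes outside the relevant patch and outside $T$ so that $F_T$ drops, and (ii) that the interpolation-error terms introduced are supported on a bounded number of coarse layers and are absorbed into the ``annulus'' energy on the right-hand side rather than the tail energy on the left. This is precisely the step where the pollution-free nature of this localization (as opposed to the $1/H$-polluted one in \cite{MaPe12}) must be extracted, and it is where the constant $\theta$ acquires its dependence on $\beta/\alpha$ but not on $H$, $h$, or the oscillations of $A$.
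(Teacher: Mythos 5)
Your overall roadmap coincides with the paper's: coarse-layer cutoff functions, an iteration over annuli giving exponential decay of the ideal corrector, Galerkin quasi-optimality with a truncated competitor for $\|\nabla(p_h^T-p_h^{T,k})\|$, and a finite-overlap argument producing the factor $k^d$. However, two of your concrete steps would fail as written. First, $(1-I_H)(\eta\, p_h^T)$ does \emph{not} lie in $W_h$: the weighted Cl\'ement operator \eqref{def-weighted-clement} is not a projection ($I_H\circ I_H\neq I_H$), so $I_H\bigl((1-I_H)v\bigr)=I_H(v)-I_H(I_H(v))\neq 0$ in general. Replacing $I_H$ by the exact projection onto $V_H$ along $W_h$ would restore membership in $W_h$ but destroy locality of the support, which you need in order to conclude $F_T(\cdot)=0$. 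The paper's repair is Lemma~\ref{lemma-2-1-from-MaPe12}: one constructs a \emph{locally supported} preimage $v\in V_h$ with $I_H(v)=I_H(\mathcal{I}_h(\eta\,p_h^T))$, $\|\nabla v\|_{L^2(\Omega)}\lesssim\|\nabla I_H(\mathcal{I}_h(\eta\,p_h^T))\|_{L^2(\Omega)}$ and $\operatorname{supp}(v)\subset\operatorname{supp}(\eta\,p_h^T)$, and sets $\tilde w:=\mathcal{I}_h(\eta\,p_h^T)-v$ (the Lagrange interpolation $\mathcal{I}_h$ is needed because the product $\eta\,p_h^T$ is a piecewise polynomial of degree $>1$ and hence not in $V_h$). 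This ingredient is missing from your outline, and without it the central commutator identity has no admissible test function.

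Second, your summation step is not justified as stated. The difference $p_h^T-p_h^{T,k}$ is \emph{not} supported in, nor energy-concentrated on, an annulus $U_{k+c}(T)\setminus U_{k-c}(T)$: the localized corrector $p_h^{T,k}$ fills all of $U_k(T)$, and $p_h^T$ has a globally supported tail whose energy is of the same order $\theta^{2k}$ as the quantity being estimated, so it cannot be discarded before summing; a direct finite-overlap Cauchy--Schwarz on the summands then produces a factor $|\T_H|\sim H^{-d}$ rather than $k^d$. The paper instead expands $\|A^{1/2}\nabla z\|_{L^2(\Omega)}^2$ for $z:=\sum_T(p_h^T-p_h^{T,k})$ and, for each $T$, splits the \emph{test function} $z=z(1-\eta_{T,k,1})+z\,\eta_{T,k,1}$; the far part is annihilated using the equations for $p_h^T$ and $p_h^{T,k}$ together with Lemma~\ref{l:cutoff}, so that each summand is paired only with $\|\nabla z\|_{L^2(U_{k+2}(T))}$, and the $O(k^d)$ overlap of the patches $U_{k+2}(T)$ then yields \eqref{equation-influence-intersections}. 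With these two repairs your argument becomes essentially the paper's proof.
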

The proof of Lemma \ref{lemma-influence-intersections} is postponed to the appendix. It is similar to the proofs given in \cite{MaPe12} and \cite{Henning:Peterseim:2012}, but with some technical details that account for the boundary conditions and the possibly quadrilateral partition of $\Omega$. Using Lemma \ref{lemma-influence-intersections} we can quantify what is a sufficient size of the localization patches $U(T)$:

\begin{theorem}[A priori error estimates for the localized method]\label{t:a-priori-local}$\\$
Assume (A1)-(A3). Given $k\in\mathbb{N}_{>0}$, let $U(T)=U_k(T)$ for all $T \in\T_H$ where $U_k(T)$ is defined as in Lemma \ref{lemma-influence-intersections}. By $u_h$ we denote the solution of the reference problem (\ref{reference-problem-eq}) and by $u_{\LOD}$ we denote the LOD approximation introduced in Definition \ref{definition-loc-lod-approx}. Then, the following a priori error estimates hold true
\begin{align*}
 \|\nabla u_h-\nabla u_{\LOD}\|_{L^2(\Omega)}&\lesssim (H + k^{\frac{d}{2}} \theta^{k} ) \|f\|_{L^2(\Omega)} + k^{\frac{d}{2}} \theta^{k} ( \|\nabla g_h \|_{L^2(\Omega)} + \| q \|_{L^2(\Gamma_N)}),\\
 \| u_h- u_{\LOD}\|_{L^2(\Omega)}&\lesssim (H + k^{\frac{d}{2}} \theta^{k} ) \|\nabla u_h-\nabla u_{\LOD}\|_{L^2(\Omega)},
\end{align*}
where $0<\theta<1$ is as in Lemma \ref{lemma-influence-intersections}.
\end{theorem}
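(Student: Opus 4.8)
The plan is to reduce the localized estimate to the ideal estimate of Theorem \ref{convergence-theorem-max-undersampling} by controlling the perturbation introduced by truncating the correctors to patches $U_k(T)$. Concretely, I would introduce the ``ideal'' quantities $R_h^{\mathrm{id}} = \mathrm{Id} + \mathbf{Q}_h$ (with $\mathbf{Q}_h = \sum_T \mathbf{Q}_h^T$ the global correctors from \eqref{optimal-global-corrector}) and the ideal Neumann corrector $\mathbf{B}_h$ solving the global analogue of \eqref{local-boundary-corrector-problem}, together with the ideal coarse solution $v_H^{\mathrm{id}}$, and let $u_{\LOD}^{\mathrm{id}}$ be the corresponding ideal LOD approximation. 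Theorem \ref{convergence-theorem-max-undersampling} already gives $\|\nabla(u_h - u_{\LOD}^{\mathrm{id}})\|_{L^2(\Omega)} \lesssim H\|f\|_{L^2(\Omega)}$, so by the triangle inequality it suffices to bound $\|\nabla(u_{\LOD}^{\mathrm{id}} - u_{\LOD})\|_{L^2(\Omega)}$.

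The core of the argument is then to estimate the three sources of localization error separately, each via Lemma \ref{lemma-influence-intersections}. First, the difference of the source correctors applied to the computed coarse function: here one writes $\mathbf{Q}_h(\Phi_H) - Q_h(\Phi_H) = \sum_T (\mathbf{Q}_h^T(\Phi_H) - Q_h^T(\Phi_H))$ and applies Lemma \ref{lemma-influence-intersections} with $F_T(\phi_h) = -\int_T A\nabla\Phi_H\cdot\nabla\phi_h$, noting $\sum_T\|\nabla\mathbf{Q}_h^T(\Phi_H)\|_{L^2}^2 \lesssim \|\nabla\Phi_H\|_{L^2(\Omega)}^2$ by finite overlap and local stability. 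Second, the difference $\mathbf{Q}_h(g_h) - Q_h(g_h)$, handled the same way with $\sum_T\|\nabla\mathbf{Q}_h^T(g_h)\|_{L^2}^2 \lesssim \|\nabla g_h\|_{L^2(\Omega)}^2$. Third, the Neumann corrector difference $\mathbf{B}_h - B_h = \sum_T(\mathbf{B}_h^T - B_h^T)$, where Lemma \ref{lemma-influence-intersections} applies with $F_T(\phi_h) = -\int_{T\cap\Gamma_N} q\phi_h$ and a trace/scaling estimate gives $\sum_T\|\nabla\mathbf{B}_h^T\|_{L^2}^2 \lesssim \|q\|_{L^2(\Gamma_N)}^2$; this is the step I expect to be the main technical obstacle, since the right-hand side lives on the boundary and one must be careful that the decay lemma's hypothesis $F_T(\phi_h)=0$ on $\mathring{W}_h(\Omega\setminus T)$ still holds and that the trace estimate does not reintroduce mesh-dependent constants. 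Each contribution produces the factor $k^{d/2}\theta^k$.

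Next I would propagate these corrector estimates through the discrete problems to control $\|\nabla(v_H^{\mathrm{id}} - v_H)\|_{L^2(\Omega)}$. Subtracting the localized equation \eqref{lod-problem-eq} from its ideal counterpart and testing with $R_h(v_H^{\mathrm{id}} - v_H)$ (or the appropriate combination), one gets a coercive left-hand side — using that $\|\nabla R_h(\Phi_H)\|_{L^2(\Omega)}$ is comparable to $\|\nabla\Phi_H\|_{L^2(\Omega)}$ via $\Phi_H = I_H(R_h(\Phi_H))$ and \eqref{e:interr} — against a right-hand side consisting of the already-bounded corrector differences tested against $f$, $\nabla g_h$ and $q$, plus the mismatch in the ansatz $R_h$ versus $\mathbf{R}_h$. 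Absorbing the coercive term and invoking the a priori energy bound \eqref{energy-estimate-ideal-2} for $v_H^{\mathrm{id}}$ (to replace $\|\nabla v_H^{\mathrm{id}}\|_{L^2}$ by data norms) yields $\|\nabla(v_H^{\mathrm{id}} - v_H)\|_{L^2(\Omega)} \lesssim k^{d/2}\theta^k(\|f\|_{L^2(\Omega)} + \|\nabla g_h\|_{L^2(\Omega)} + \|q\|_{L^2(\Gamma_N)})$.

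Finally I would assemble: $u_{\LOD}^{\mathrm{id}} - u_{\LOD} = \mathbf{R}_h(v_H^{\mathrm{id}}) - R_h(v_H) + \mathbf{Q}_h(g_h) - Q_h(g_h) - (\mathbf{B}_h - B_h)$, and estimate $\mathbf{R}_h(v_H^{\mathrm{id}}) - R_h(v_H) = (\mathbf{R}_h - R_h)(v_H^{\mathrm{id}}) + R_h(v_H^{\mathrm{id}} - v_H)$, the first term controlled by the source-corrector difference estimate together with the energy bound on $v_H^{\mathrm{id}}$, the second by $\|\nabla(v_H^{\mathrm{id}} - v_H)\|_{L^2}$ times a constant. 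Collecting all pieces gives $\|\nabla(u_{\LOD}^{\mathrm{id}} - u_{\LOD})\|_{L^2(\Omega)} \lesssim k^{d/2}\theta^k(\|f\|_{L^2(\Omega)} + \|\nabla g_h\|_{L^2(\Omega)} + \|q\|_{L^2(\Gamma_N)})$, and adding the Theorem \ref{convergence-theorem-max-undersampling} bound produces the first claimed inequality. The $L^2$ estimate then follows by a standard Aubin--Nitsche duality argument: let $z\in H^1_{\Gamma_D}(\Omega)$ solve the adjoint problem with right-hand side $u_h - u_{\LOD}$, approximate $z$ by its own LOD/interpolation and use the orthogonality $u_h - u_{\LOD} \perp$ suitable test functions together with \eqref{e:interr} to gain the extra factor $(H + k^{d/2}\theta^k)$.
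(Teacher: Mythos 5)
Your overall strategy coincides with the paper's: reduce to the ideal case of Theorem \ref{convergence-theorem-max-undersampling}, control the three localization errors $(Q_h-Q_h^{\Omega})(v_H^{\Omega})$, $(Q_h-Q_h^{\Omega})(g_h)$ and $B_h-B_h^{\Omega}$ with Lemma \ref{lemma-influence-intersections}, and bound the local energies $\sum_T\|\nabla Q_h^{\Omega,T}(\cdot)\|^2$, $\sum_T\|\nabla B_h^{\Omega,T}\|^2$ by data norms via \eqref{energy-estimate-ideal-2}. (Your worry about the Neumann term is resolvable: for $w\in \mathring{W}_h(\Omega\setminus T)$ one has $w=0$ on $\overline{T}\supset T\cap\Gamma_N$, so the hypothesis of the decay lemma holds, and a scaled trace inequality combined with $I_Hw=0$ gives $\|w\|_{L^2(T\cap\Gamma_N)}\lesssim H^{1/2}\|\nabla w\|_{L^2(\omega_T)}$, hence $\|\nabla B_h^{\Omega,T}\|_{L^2(\Omega)}\lesssim\|q\|_{L^2(T\cap\Gamma_N)}$ with a mesh-independent constant.) The genuine structural difference is your treatment of the coarse parts: you propose a Strang-type perturbation bound for $\|\nabla(v_H^{\Omega}-v_H)\|_{L^2(\Omega)}$ by subtracting the two discrete systems and absorbing the $O(k^{d/2}\theta^k)$ perturbation of the bilinear form. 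This can be made to work, but the absorption is only automatic for $k$ large (for bounded $k$ you must argue separately that the claimed estimate is trivial by stability), and it is avoidable: the paper uses the Galerkin orthogonality $\int_{\Omega}A\nabla(R_h(v_H)+Q_h(g_h)-B_h-v_h)\cdot\nabla R_h(\Phi_H)=0$, i.e.\ quasi-optimality of the localized method in the space $\{R_h(\Phi_H)\}$, and simply inserts $\Phi_H=v_H^{\Omega}$ as a competitor, so that $v_H^{\Omega}-v_H$ never needs to be estimated at all. Your route buys an explicit bound on $v_H^{\Omega}-v_H$ that the final statement does not require; the paper's route is shorter and free of smallness conditions.

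The one step in your sketch that would fail as written is the $L^2$ estimate. You start from the \emph{continuous} adjoint solution $z\in H^1_{\Gamma_D}(\Omega)$ and invoke \eqref{e:interr}; to gain a full factor $H$ in the $H^1$-distance from $z$ to a coarse object by interpolation you would need $H^2$-regularity of $z$, which is not available for $A\in L^{\infty}$, and otherwise the gap $\|\nabla(z-z_h)\|_{L^2(\Omega)}$ is uncontrolled. The correct mechanism, and the one the paper uses, is to pose the dual problem \emph{discretely} in $V_h$ (find $z_h\in V_h$ with $\int_{\Omega}A\nabla\phi_h\cdot\nabla z_h=\int_{\Omega}e_h\phi_h$) together with its localized multiscale approximation $R_h(z_H)$; since $e_h\in V_h$ one gets $\|e_h\|_{L^2(\Omega)}^2=\int_{\Omega}A\nabla e_h\cdot\nabla(z_h-R_h(z_H))$ by the Galerkin orthogonality, and the factor $(H+k^{d/2}\theta^k)$ comes from applying the already-proven $H^1$ estimate to this dual problem (which has zero boundary data and source $e_h$), not from interpolation of $z$. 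Your phrase ``approximate $z$ by its own LOD'' points in the right direction, but the argument must be anchored at $z_h$, not at $z$.
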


\begin{remark}[Discussion of localization strategies]
Assume that $\Gamma_D=\partial \Omega$ and that $g=0$. The LOD is based on an appropriate localization of the optimal correction operator $\mathbf{Q}_h: V_H \rightarrow W_h$ given by (\ref{optimal-global-corrector}). Furthermore, $k>0$ is an integer. 

In \cite{MaPe12} it was proposed to pick a $k$-layer environment $U_k(\omega_z)$ of $\omega_z:=\mbox{supp}(\Phi_z)$ for every coarse nodal basis function $\Phi_z$ ($z \in \mathcal{\mathcal{N}}_H$) and to solve for $\lambda_z \in \mathring{W}_h(U_k(\omega_z))$ with
\begin{align*}
\int_{U(\omega_z)} A \nabla \lambda_z  \cdot \nabla w_h = - \int_{\omega_z} A \nabla \Phi_z \cdot \nabla w_h \qquad \mbox{for all } w_h \in \mathring{W}_h(U_k(\omega_z)).
\end{align*}
For arbitrary $\Phi_H \in V_H$, the approximation of the optimal global corrector $\mathbf{Q}_h$ is then given by $Q_h^{\mathbf{1}}(\Phi_H):= \sum_{z \in \mathcal{N}_H} \Phi_H(z) (\lambda_z + \Phi_z)$. Since '$\lambda_z + \Phi_z$' does not form a partition of unity, the localization error is polluted by the factor $(1/H)$, i.e. we obtain the worse estimate
\begin{align*}
 \|\nabla u_h-\nabla u^{ms}\|_{L^2(\Omega)}&\lesssim (H + (1/H) k^{\frac{d}{2}} \theta^{k} ) \|f\|_{L^2(\Omega)}.
\end{align*}
The factor $(1/H)$ can be numerically observed and leeds to larger patches $U_k(\omega_z)$.

In \cite{Henning:Peterseim:2012} it was proposed to solve for $w_{h,T,i} \in \mathring{W}_h(U(T))$ (for $1\le i \le d$) with
\begin{align*}
\int_{U(T)} A \nabla w_{h,T,i} \cdot \nabla w_h = - \int_{T} A e_i \cdot \nabla w_h \qquad \mbox{for all } w_h \in \mathring{W}_h(U(T)),
\end{align*}
where $e_i \in \R^d$ denotes the $i$'th unit vector in $\R^d$ (i.e. $(e_i)_j = \delta_{ij}$). The approximation of the global corrector is given by $Q_h^{\mathbf{2}}(\Phi_H):= \sum_{T\in \mathcal{T}_H} \sum_{i=1}^d \partial_{x_i} \Phi_H(x_T) w_{h,T,i}$, where $x_T$ denotes the barycenter of $T$. This approach is motivated from homogenization theory and leads to the same error estimates as presented in Theorem \ref{t:a-priori-local}. However, this strategy is restricted to P1 Finite Elements on triangular grids (in this case it is equivalent to the strategy presented in this paper) and in particular it fails for quadrilateral grids.

Another localization strategy, also based on a partition of unity for the right hand side of the local problems, was proposed in \cite{Larson:Malqvist:2007}. Similar a-priori error estimates can be expected, however, in the mentioned work, more local problems need to be solved.
\end{remark}

\begin{conclusion}
\label{conclusion-convergence-rates}
Let assumptions (A1)-(A3) be fulfilled, let $\T_H$ be a given coarse triangulation and let $\mathcal{U}$ denote a corresponding set of admissible patches, with the property that each patch $U(T)$ is of category $m\in \mathbb{N}_{>0}$ (in the sense of Definition \ref{category-k}). Then for arbitrary mesh sizes $H\ge h$ it holds
\begin{align*}
 \|\nabla u_h-\nabla u_{\LOD}&\|\lesssim H \|f\|_{L^2(\Omega)} + H^m (\|\nabla g_h \|_{L^2(\Omega)} + \| q \|_{L^2(\Gamma_N)}),\\
 \| u_h- u_{\LOD}&\|\lesssim H^2 \|f\|_{L^2(\Omega)} + H^{2m} ( \|\nabla g_h \|_{L^2(\Omega)} + \| q \|_{L^2(\Gamma_N)}).
\end{align*}
Observe that powers in $m$ are obtained from Theorem \ref{t:a-priori-local} by choosing $k \gtrsim m \log(H^{-1})$.
\end{conclusion}

\begin{proof}[Proof of Theorem~\ref{t:a-priori-local}]
Let $Q_h^T: V_h \rightarrow \mathring{W}_h(U_k(T))$ denote the correction operator defined according to (\ref{local-corrector-problem}) and let $Q_h^{\Omega,T}: V_h \rightarrow W_h$ denote the 'ideal' correction operator for $U(T)=\Omega$. Likewise, by $B_{h}^{T} \in \mathring{W}_h(U(T))$ we denote the boundary corrector given by (\ref{local-boundary-corrector-problem}) and by $B_{h}^{\Omega,T} \in W_h$ we denote the solution of (\ref{local-boundary-corrector-problem}) for $U(T)=\Omega$. In the same way, we distinguish between $Q_h$ and $Q_h^{\Omega}$; $B_h$ and $B_h^{\Omega}$ and $u_{\LOD}$ and $u_{\LOD}^{\Omega}$. The coarse part $v_H$ of the LOD approximation is defined by (\ref{lod-problem-eq}) for $U(T)=U_k(T)$ and by $v_H^{\Omega}$ for $U(T)=\Omega$. Let $\Phi_H \in V_H$ be arbitrary. Using the Galerkin orthogonality
\begin{eqnarray}
\label{galerkin-orthogonality-error-equation}
\int_{\Omega} A \nabla \left( R_h(v_H) + Q_h(g_h) -  B_{h} -v_h \right) \cdot \nabla R_h(\Phi_H) = 0,
\end{eqnarray}
we get
\begin{eqnarray}
\label{galerkin-orthogonality-in-final-proof}\nonumber\lefteqn{\|A^{1/2} \nabla \left( R_h(v_H) + Q_h(g_h) -  B_{h} -v_h \right)\|_{L^2(\Omega)}}\\
&\leq& \|A^{1/2}\nabla \left( R_h(\Phi_H) + Q_h(g_h) -  B_{h} -v_h \right)\|_{L^2(\Omega)}.
\end{eqnarray}
This yields
\begin{eqnarray*}
\lefteqn{\|\nabla u_h-\nabla u_{\LOD}\|_{L^2(\Omega)}}\\
&=& \|\nabla v_h -\nabla R_h(v_H) - \nabla Q_h(g_h) + \nabla  B_{h} \|_{L^2(\Omega)}\\
&\overset{(\ref{galerkin-orthogonality-in-final-proof})}{\lesssim}& \|\nabla v_h-\nabla v_H^{\Omega} - \nabla Q_h(v_H^{\Omega}) - \nabla Q_h(g_h) + \nabla  B_{h}\|_{L^2(\Omega)}\\
&\le& 
\|\nabla v_h +  \nabla g_h -\nabla v_H^{\Omega} - \nabla Q_h^{\Omega}(v_H^{\Omega}) - \nabla Q_h^{\Omega}(g_h) + \nabla  B_{h}^{\Omega} -  \nabla g_h\|_{L^2(\Omega)}\\
&\enspace&+  \|\nabla \hspace{-3pt}\left( Q_h- Q_h^{\Omega}\right)\hspace{-3pt}(v_H^{\Omega})\|_{L^2(\Omega)}
+ \|\nabla \hspace{-3pt}\left( Q_h- Q_h^{\Omega}\right)\hspace{-3pt}(g_h)\|_{L^2(\Omega)}
+  \|\nabla \hspace{-3pt}\left( B_h- B_h^{\Omega}\right)\|_{L^2(\Omega)}
 \\
&\overset{(\ref{equation-influence-intersections})}{\lesssim}& \|\nabla u_h-\nabla u_{\LOD}^{\Omega}\|_{L^2(\Omega)}\\
&\enspace&+ k^{d/2} \theta^{ k} \hspace{-2pt} \left( \sum_{T\in\T_H} \|\nabla Q_h^{\Omega,T}(v_H^{\Omega}) \|_{L^2(\Omega)}^2 + \|\nabla Q_h^{\Omega,T}(g_h) \|_{L^2(\Omega)}^2 + \|\nabla B_h^{\Omega,T} \|_{L^2(\Omega)}^2  \right)^{1/2} \hspace{-4pt}.
\end{eqnarray*}
Equation \eqref{convergence-equation-max-undersampling} and the estimates
\begin{align*}
\sum_{T\in\T_H}  \|\nabla Q_h^{\Omega,T}(v_H^{\Omega}) \|_{L^2(\Omega)}^2 &\overset{(\ref{local-corrector-problem})}{\lesssim} \sum_{T\in\T_H}  \|\nabla v_H^{\Omega} \|_{L^2(T)}^2 =  \|\nabla v_H^{\Omega} \|_{L^2(\Omega)}^2 \\
&\overset{(\ref{energy-estimate-ideal-2})}{\lesssim} \|f\|_{L^2(\Omega)}^2 + \|\nabla g_h \|_{L^2(\Omega)}^2 + \| q \|_{L^2(\Gamma_N)}^2,
\end{align*}
and
\begin{align*}
\|\nabla Q_h^{\Omega,T}(g_h) \|_{L^2(\Omega)} &\overset{(\ref{local-corrector-problem})}{\lesssim} \|\nabla g_h \|_{L^2(T)} \quad \mbox{and} \quad
\|\nabla B_h^{\Omega,T} \|_{L^2(\Omega)} \overset{(\ref{local-boundary-corrector-problem})}{\lesssim} \|q \|_{L^2(T\cap \Gamma_N)},
\end{align*}
readily yield the assertion for the $H^1$-error. The $L^2$-error estimate is obtained by a Aubin-Nitsche duality argument. We define $e_h:=u_h - u_{\LOD}$. Note that $e_h \in V_h$, but in general not in $W_h$ (only for $U(T)=\Omega$). We consider two dual problems (that correspond to problems with homogenous Dirichlet and Neumann boundary condition): find $z_h \in V_h$ with
\begin{align}
\label{dual-problem-fine-space}\int_{\Omega} A \nabla \phi_h \cdot \nabla z_h = \int_{\Omega} e_h \phi_h \quad \mbox{for all } \phi_h \in V_h
\end{align}
and find $z_H \in V_H$ with
\begin{align}
\label{dual-problem-ms-space}\int_{\Omega} A \nabla R_h(\Phi_H) \cdot \nabla R_h(z_H) = \int_{\Omega} e_h R_h(\Phi_H) \quad \mbox{for all } \Phi_H \in V_H.
\end{align}
As in the previous case, we get
\begin{align}
\label{dual-estimate}\| \nabla (z_h - R_h(z_H)) \|_{L^2(\Omega)} \lesssim (H + k^{d/2} \theta^{ k}) \| e_h \|_{L^2(\Omega)}.
\end{align}
On the other hand we have with $e_h \in V_h$
\begin{align*}
\| e_h \|_{L^2(\Omega)}^2 &\overset{(\ref{dual-problem-fine-space})}{=} \int_{\Omega} A \nabla e_h \cdot \nabla z_h  \overset{(\ref{galerkin-orthogonality-error-equation})}{=}  \int_{\Omega} A \nabla e_h \cdot (\nabla z_h - \nabla R_h(z_H) )\\
& \overset{(\ref{dual-estimate})}{\lesssim} \| \nabla e_h \|_{L^2(\Omega)} ( H + k^{d/2} \theta^{ k}) \| e \|_{L^2(\Omega)}.
\end{align*}
Dividing by $\| e_h \|_{L^2(\Omega)}$ and with the previously derived estimate for $\| \nabla e_h \|_{L^2(\Omega)}$ we obtain the $L^2$ error estimate.
\end{proof}

\section{Numerical experiments}
\label{section-numerical-experiments}

In this section we present three different model problems with corresponding numerical results. The first model problem is to demonstrate the usability of the boundary correctors. Here we prescribe a Dirichlet boundary condition that is rapidly oscillating and that cannot be captured by the coarse grid. However, we will see that the Dirichlet boundary correctors perfectly capture its effect. In the second numerical experiment we investigate the influence of a very thin isolator close to the boundary of the domain in the case of a non-zero Dirichlet boundary condition. This leads to a solution with very narrow accumulations that cannot be described on the coarse scale, but which are accurately resembled by the LOD approximation. In the third model problem we investigate how the method reacts to channels of high conductivity and an additional isolator channel. These channels are very thin and long. Typically, such channels have to be either resolved by the coarse mesh or the localized patches must be large enough so that each channel is contained in a patch. In our experiment we observe that neither is necessary if we apply the LOD to this model problem.

\begin{figure}[h]
\centering
\includegraphics[width=0.8\textwidth]{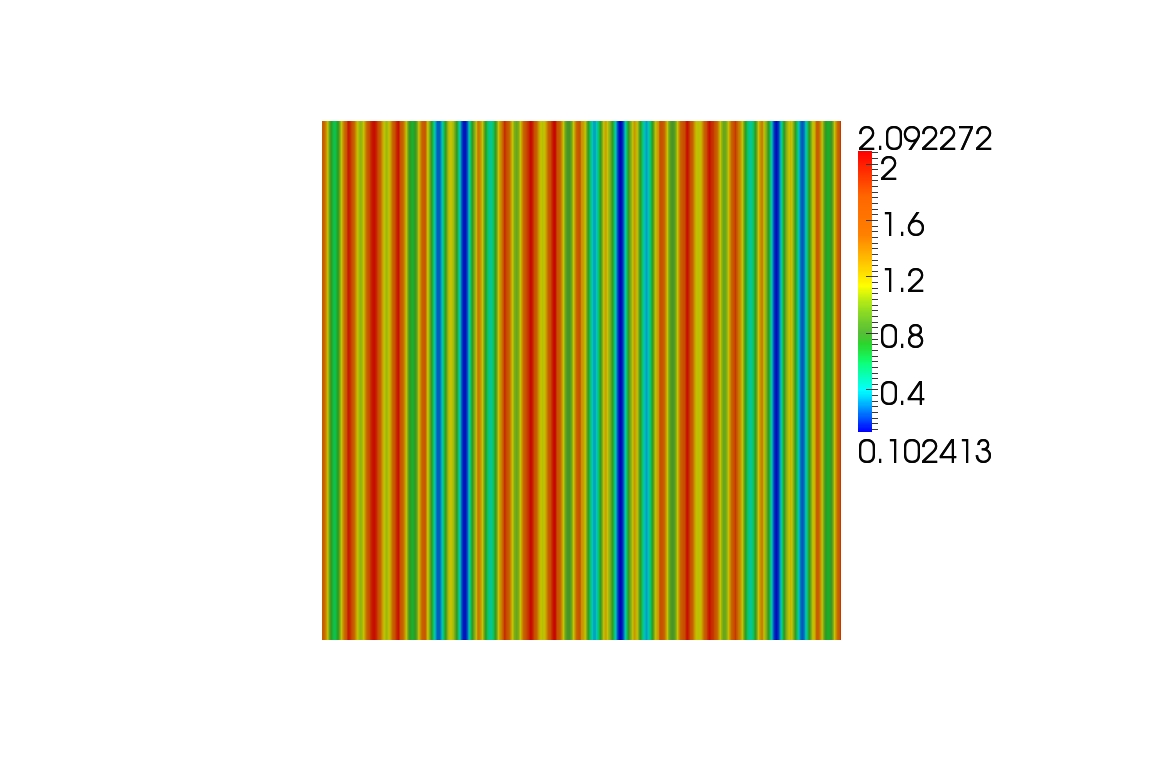}
\caption{\it Model problem 1. Plot of the rapidly varying diffusion coefficient $A$ given by equation (\ref{eq:coefficient_problem_1}). It takes values between about $0.1$ and $2.1$.}
\label{diffusion_problem_1}
\end{figure}

\begin{figure}[h]
\centering
\includegraphics[width=1.0\textwidth]{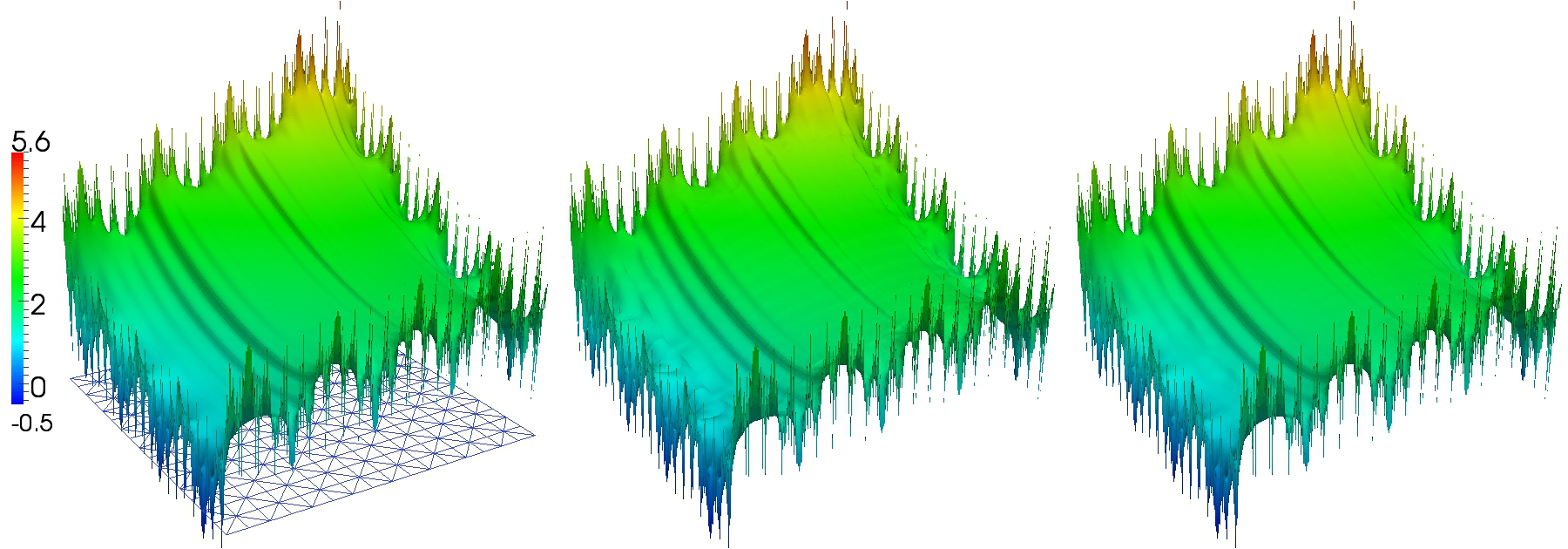}
\caption{\it Model problem 1. Computations made for $H=2^{-4}$ and $h=2^{-8}$. The left picture shows the standard FEM reference solution on the fine grid and, below, the coarse grid for comparison. The middle picture shows the LOD approximation obtained for $1$ coarse grid layer and the right picture shows the LOD approximation for $2$ coarse grid layers.}
\label{mp-1-serie}
\end{figure}

\begin{figure}[h]
\centering
\includegraphics[width=0.8\textwidth]{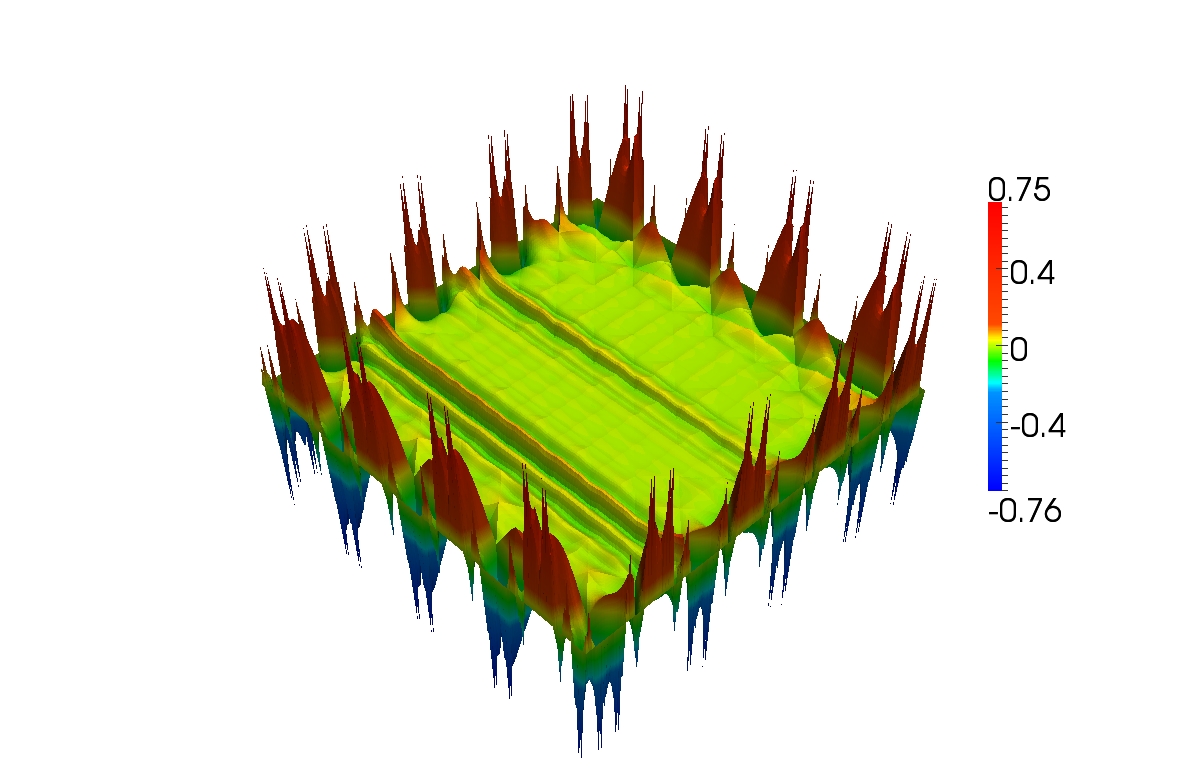}
\caption{\it Model problem 1. Computations made for $H=2^{-4}$, $h=2^{-8}$ and $k=2$ coarse grid layers around each $T\in\T_H$ for localization. The picture shows the fine part (i.e. corrector part $Q_h(v_H+g_h)-B_h$) of the LOD approximation.}
\label{mp-1-fine-part}
\end{figure}

\begin{table}[h]\footnotesize
\caption{\it Model problem 1. Computations made for $h=2^{-8}$, i.e. $|\mathcal{T}_h|=131072$ and $|\mathcal{N}_h|=66049$. $k$ denotes the number of coarse layers. $|\mathcal{T}_h^\mathcal{U}|$ and $|\mathcal{N}_h^\mathcal{U}|$, the averages for elements and nodes in a patches, are defined in (\ref{def-average-nodes-elements}). The table depicts errors between $u_h$ and $u_{\LOD}$.}
\label{serie-convergence-mp-1}
\begin{center}
\begin{tabular}{|c|c|c|c|c|c|c|}
\hline $H$      & \mbox{k} & $\|u_h - u_{\LOD}\|_{L^2(\Omega)}^{\mbox{\tiny rel}}$ & $\|u_h - u_{\LOD}\|_{H^1(\Omega)}^{\mbox{\tiny rel}}$ & $|\mathcal{T}_h^\mathcal{U}|$ & $|\mathcal{N}_h^\mathcal{U}|$ \\
\hline
\hline $2^{-2}$ & 0.5 & 0.03593 & 0.07684 & 22480 & 11465 \\
\hline $2^{-3}$ & 1   &  0.00824 & 0.04241 & 14696 & 7525\\
\hline $2^{-4}$ & 2   & 0.00162 & 0.01664 & 10743 & 5520 \\
\hline $2^{-5}$ & 4   & 0.00024 & 0.00453 & 8922 & 4596 \\
\hline
\end{tabular}
\end{center}
\end{table}

\begin{table}[h]\footnotesize
\caption{\it Model problem 1. Computations made for $H=2^{-4}$ and $h=2^{-8}$, i.e. $|\mathcal{T}_h|=131072$ and $|\mathcal{N}_h|=66049$. In the first column, the number of fine grid element layers is shown, $k$ denotes the corresponding number of coarse grid element layers. $|\mathcal{T}_h^\mathcal{U}|$ and $|\mathcal{N}_h^\mathcal{U}|$ are defined in (\ref{def-average-nodes-elements}). The table depicts $L^2$- and $H^1$-errors.}
\label{serie-decay-mp-1}
\begin{center}
\begin{tabular}{|c|c|c|c|c|c|c|}
\hline \mbox{Fine layers} &
          \mbox{k} & $\|u_h - u_{\LOD}\|_{L^2(\Omega)}^{\mbox{\tiny rel}}$ & $\|u_h - u_{\LOD}\|_{H^1(\Omega)}^{\mbox{\tiny rel}}$
          & $|\mathcal{T}_h^\mathcal{U}|$ & $|\mathcal{N}_h^\mathcal{U}|$ \\
\hline
\hline 4 & 0.25    &  0.02699 & 0.24344 & 847     & 471    \\
\hline 8 & 0.5      &  0.01593 & 0.14345 & 1675   & 900    \\
\hline 16 & 1       &  0.00508 & 0.05071 & 3994   & 2090  \\
\hline 32 & 2       &  0.00162 & 0.01664 & 10743 & 5520  \\
\hline 64 & 4       &  0.00017 & 0.00185 & 30599 & 15548 \\
\hline
\end{tabular}
\end{center}
\end{table}

In this section, we let $u_h$ denote the fine scale reference given by (\ref{reference-problem-eq}) and we let $u_{LOD}$ denote the LOD approximation given by Definition \ref{definition-loc-lod-approx}. All errors are relative errors denoted by
\begin{align*}
\|u_h - u_{\LOD}\|_{L^2(\Omega)}^{\mbox{\tiny rel}}&:=\frac{\|u_h - u_{\LOD}\|_{L^2(\Omega)}}{\|u_h\|_{L^2(\Omega)}} \quad \mbox{and} \\ 
\|u_h - u_{\LOD}\|_{H^1(\Omega)}^{\mbox{\tiny rel}}&:=\frac{\|u_h - u_{\LOD}\|_{H^1(\Omega)}}{\|u_h\|_{H^1(\Omega)}}.
\end{align*}
In the following, we use localization patches that we construct by adding {\it fine grid element layers} to a {\it coarse grid element}, i.e. for a given fixed number of fine layers $\ell\in \mathbb{N}_{>0}$ and for $T\in\T_H $, we define element patches by
\begin{align*}
U_{h,0}(T) & := T \quad \mbox{and} \quad  U_{h,\ell}(T) := \cup\{T'\in \T_h\;\vert\; T'\cap U_{h,\ell-1}(T)\neq\emptyset\}\quad \ell=1,2,\ldots .
\end{align*}
This choice is more flexible than using full coarse grid element layers for constructing the patches. Still, in the spirit of definition (\ref{def-patch-U-k}), any number of fine grid element layers translates into a corresponding number of coarse grid element layers (which might be fractional then). For the readers convenience we will state both numbers, even though they can be concluded from each other. Subsequently, $\lfloor \cdot \rfloor$ denotes the floor function. For fixed $\T_h$ and fixed set of patches $\mathcal{U}$ (see Definition \ref{def-admissible-patches}) we denote by $|\mathcal{T}_h^\mathcal{U}|$ and $|\mathcal{N}_h^\mathcal{U}|$ the average number of elements and the average number of nodes in the patches, i.e.
\begin{align}
\label{def-average-nodes-elements}
|\mathcal{T}_h^\mathcal{U}| := |\mathcal{U}|^{-1} \sum_{U \in \mathcal{U}} |\mathcal{T}_h(U)|
\qquad
\mbox{and}
\qquad
|\mathcal{N}_h^\mathcal{U}| := |\mathcal{U}|^{-1} \sum_{U \in \mathcal{U}} |\mathcal{N}_h(U)|.
\end{align}

\subsection{Model problem 1}

We consider the following model problem.
\begin{problem}
Let $\Omega:= ]0,1[^2$ and $\epsilon:=0.05$. Find $u \in H^1(\Omega)$ such that
\begin{equation*}\label{eq:model-1}
  \begin{aligned}
    -\nabla \cdot A(x)\nabla u(x) &= 1\quad \hspace{181pt}\text{\rm in }\Omega, \\
    u(x) & = \sin \left( \frac{2\pi}{\epsilon} x_1 \right) +\cos \left( \frac{2\pi}{\epsilon} x_2 \right) + \frac{1}{2} e^{x_1 + x_2} 
     \quad \hspace{7pt}\text{\rm on }\partial \Omega,
  \end{aligned}
\end{equation*}
where
\begin{align}
\label{eq:coefficient_problem_1} A(x_1,x_2):=\frac{11}{10} + \frac{1}{2} \sin \left( \left \lfloor \frac{x_1}{\epsilon} \right \rfloor \right) +  \frac{1}{2} \cos \left( 2\pi \frac{x_1}{\epsilon} \right).
\end{align}
$A$ is depicted in Figure \ref{diffusion_problem_1}.
\end{problem}
This first model problem involves a Dirichlet boundary condition that is rapidly oscillating and that cannot be accurately described on the coarse scale. We want to investigate how the Dirichlet boundary corrector captures these effects to incorporate them in the final LOD approximation without resolving the boundary with the coarse mesh. The reference solution was obtained with a standard finite element method for $h=2^{-8}$. First, we choose the coarse grid with mesh size $H$ such that $h=H^2$. In Figure \ref{mp-1-serie} we can see the corresponding results. The left plot shows the reference solution, the middle plot the LOD approximation obtained using localized patches with $1$ coarse grid layer (in the sense of (\ref{def-patch-U-k})) and the right plot shows the LOD approximation with $2$ coarse grid layers. We observe that the boundary oscillations and all relevant fine scale features are perfectly captured by the LOD, even for small patch sizes and without resolving the boundary conditions with the coarse mesh. For $2$ coarse grid layers almost no difference to the reference solution is visible. The influence of the boundary corrector can be concluded from Figure \ref{mp-1-fine-part} where the whole fine scale part of $u_{\LOD}$ is depicted. We see that the boundary correctors contribute essential information. A quantitative comparison between reference solution and LOD is given in Tables \ref{serie-convergence-mp-1} and \ref{serie-decay-mp-1}. Table \ref{serie-convergence-mp-1} shows the error behavior if we double the number of coarse layers with each uniform coarse grid refinement (starting with half a coarse layer for $H=2^{-2}$). We observe up to quadratic convergence for the $H^1$-error and up to almost cubic convergence for $L^2$-error. Note that these high rates are only due to the doubling of the number of coarse layers, instead of increasing the patch thickness by the logarithmic factor $\log(H^{-1})$. We refer to the numerical experiments in \cite{Henning:Malqvist:Peterseim:2012} for detailed results on how the rates stated in Conclusion \ref{conclusion-convergence-rates} can be obtained by the logarithmic scaling. In Table \ref{serie-decay-mp-1}, the exponentially fast decay of the error with respect to coarse grid layers is demonstrated. Using the newly introduced boundary correctors, the LOD is able to accurately handle the rapidly varying Dirichlet boundary condition (in addition to the variations produced by the diffusion coefficient $A$).

\subsection{Model problem 2}

\begin{figure}[t]
\centering
\includegraphics[width=0.8\textwidth]{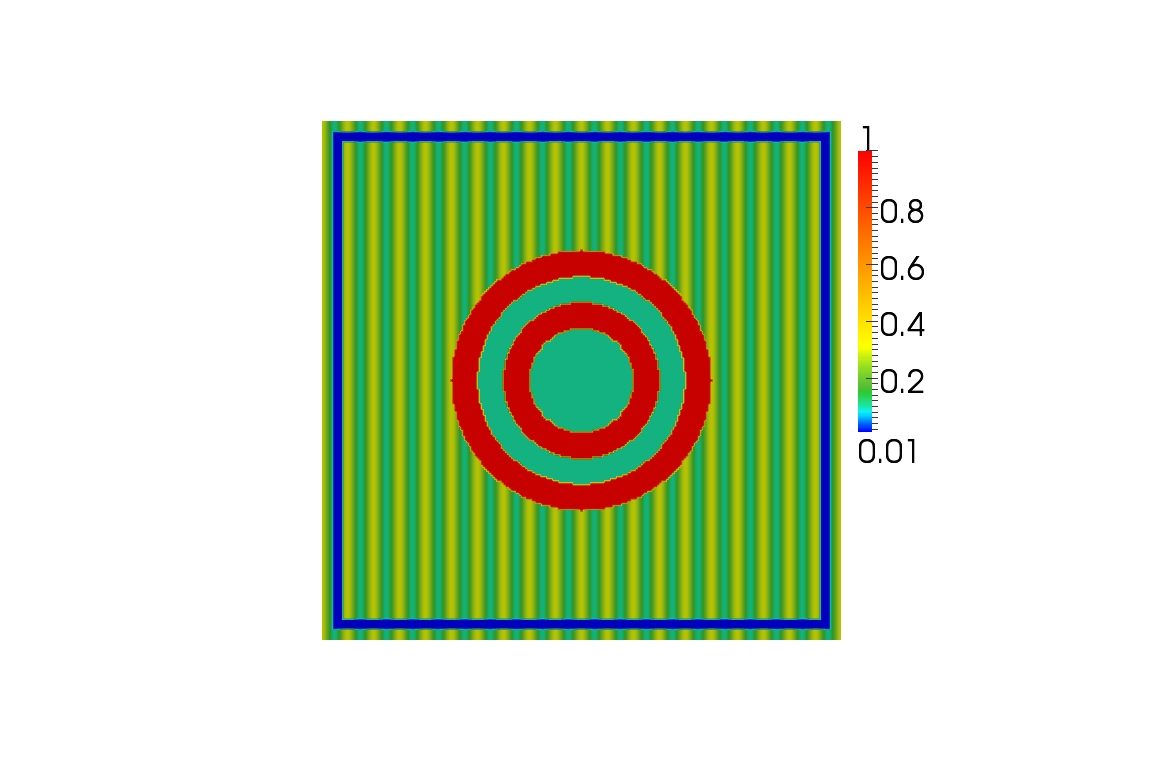}
\caption{\it Model problem 2. Plot of the diffusion coefficient $A$. It consists of a rapidly varying basis structure (green/yellow region) given by $\frac{1}{10} ( 2 + \cos ( 2 \pi \frac{x_1}{\epsilon}))$ for $\epsilon=0.05$. Here, $A$ takes values between $0.1$ and $0.3$. This structure is perturbed by an isolator that is located close to the boundary (blue region) and that has a conductivity of $0.01$. A second perturbation can be found in a ball of radius $0.25$ around the center of the domain. Here, the diffusivity changes its values in circular layers between $1$ (red region) and $0.1$ (turquoise region).}
\label{diffusion_problem_2}
\end{figure}

\begin{table}[t]\footnotesize
\caption{\it Model problem 2. Computations made for $h=2^{-8}$, i.e. $|\mathcal{T}_h|=131072$ and $|\mathcal{N}_h|=66049$. In the second column, the number of fine grid element layers is shown, $k$ denotes the corresponding number of coarse grid element layers. $|\mathcal{T}_h^\mathcal{U}|$ and $|\mathcal{N}_h^\mathcal{U}|$ are defined in (\ref{def-average-nodes-elements}). The table depicts $L^2$- and $H^1$-errors between $u_h$ and $u_{\LOD}$.}
\label{serie-convergence-mp-2}
\begin{center}
\begin{tabular}{|c|c|c|c|c|c|c|}
\hline $H$     & \mbox{Fine layers} & \mbox{k} & $\|u_h - u_{\LOD}\|_{L^2(\Omega)}^{\mbox{\tiny rel}}$ & $\|u_h - u_{\LOD}\|_{H^1(\Omega)}^{\mbox{\tiny rel}}$ & $|\mathcal{T}_h^\mathcal{U}|$ & $|\mathcal{N}_h^\mathcal{U}|$ \\
\hline
\hline $2^{-3}$ & 4   & 0.125 & 0.09234 & 0.50579 & 2047   & 1102 \\
\hline $2^{-3}$ & 8   & 0.25   & 0.06929 & 0.38912 & 3290   & 1738 \\
\hline $2^{-3}$ & 16 & 0.5     & 0.04636 & 0.26852 & 6340   & 3291 \\
\hline $2^{-3}$ & 32 & 1        & 0.01708 & 0.12064 & 14696 & 7525 \\
\hline $2^{-3}$ & 64 & 2        & 0.00655 & 0.07400 & 36398 & 18472 \\
\hline $2^{-3}$ & 96 & 3        & 0.00557 & 0.06996 & 61556 & 31131 \\
\hline
\hline $2^{-4}$ & 4   & 0.25  & 0.05513 & 0.35118 & 847     & 471 \\
\hline $2^{-4}$ & 8   & 0.5    & 0.02893 & 0.19508 & 1675   & 900 \\
\hline $2^{-4}$ & 16 & 1       & 0.00908 & 0.09389 & 3994   & 2090 \\
\hline $2^{-4}$ & 32 & 2       & 0.00159 & 0.03066 & 10743 & 5520 \\
\hline $2^{-4}$ & 48 & 3       & 0.00091 & 0.02269 & 19821 & 10111\\
\hline $2^{-4}$ & 64 & 4       & 0.00074 & 0.02011 & 30599 & 15548 \\
\hline
\end{tabular}
\end{center}
\end{table}

\begin{figure}[t]
\centering
\includegraphics[width=1.0\textwidth]{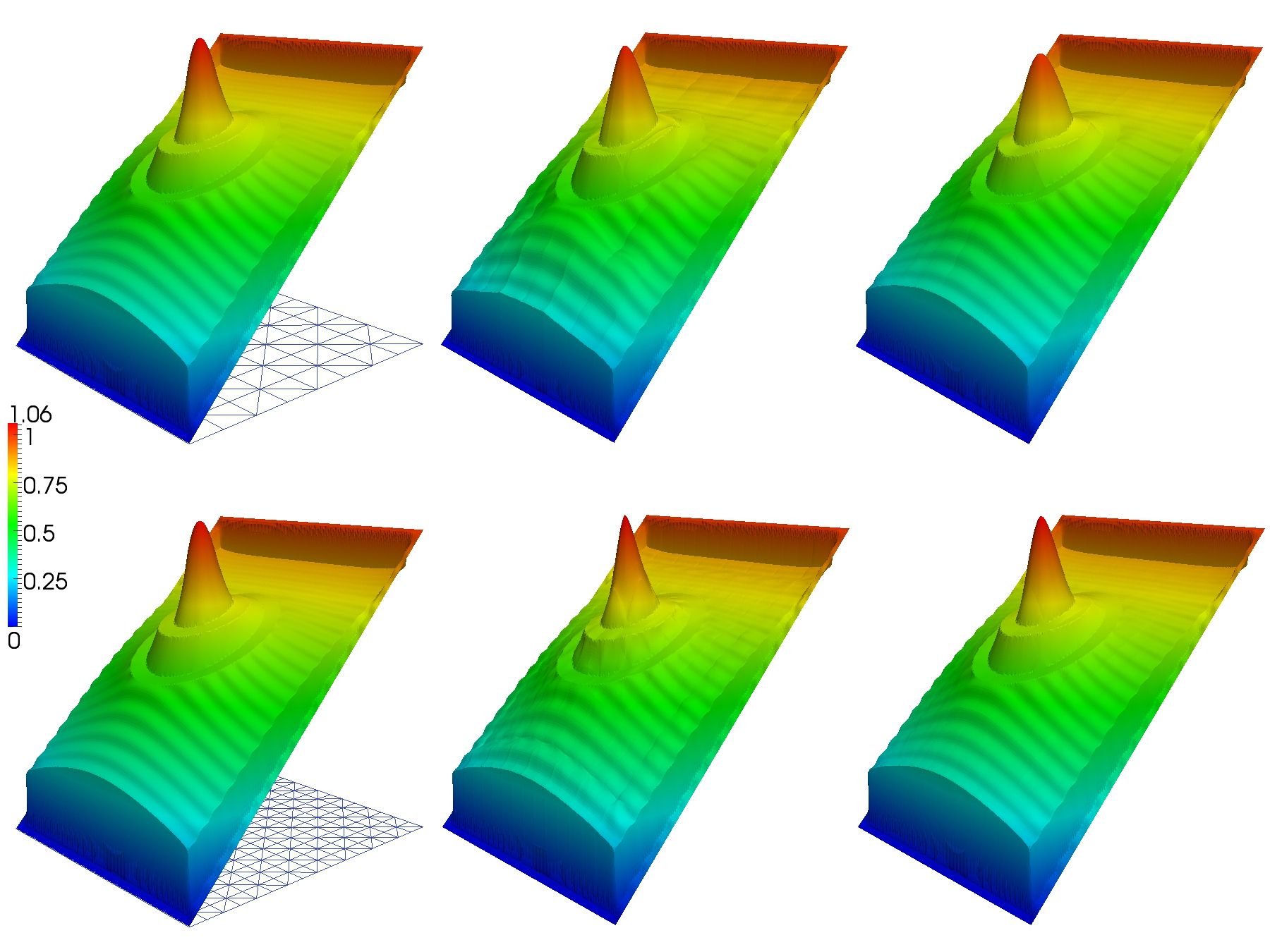}
\caption{\it Model problem 2. Computations made for $h=2^{-8}$ and respectively $H=2^{-3}$ in the upper row and $H=2^{-4}$ in the lower row. The left picture always shows the standard FEM reference solution on the fine grid (i.e. $h=2^{-8}$) and, below, the coarse grid for comparison ($H=2^{-3}$ and $H=2^{-4}$ respectively). The middle picture shows the LOD approximation obtained for $1$ coarse grid layer and the right picture shows the LOD approximation for $2$ coarse grid layers.}
\label{mp-2-serie}
\end{figure}

\begin{figure}[t]
\centering
\includegraphics[width=0.8\textwidth]{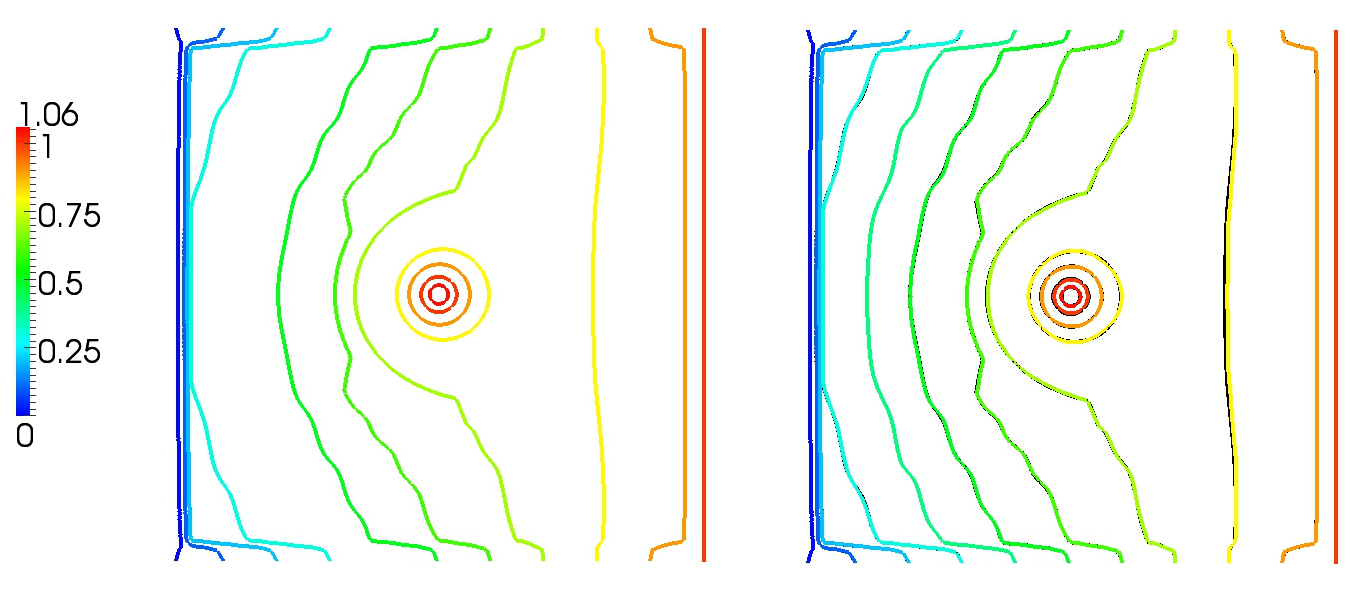}
\caption{\it Model problem 2. Computations made for $h=2^{-8}$ and $H=2^{-4}$ and $2$ coarse grid element layers for localization. The left picture depicts the isolines of the FEM reference solution on the fine grid (i.e. $h=2^{-8}$). The right picture shows a comparison of the isolines of LOD approximation and reference solution. The colored isolines belong to the LOD approximation. They overlie the corresponding black-colored isolines of the reference solution.}
\label{mp-2-isolines-serie}
\end{figure}

We consider the following model problem.
\begin{problem}
Let $\Omega:= ]0,1[^2$. Find $u \in H^1(\Omega)$ such that
\begin{equation*}\label{eq:model-2}
  \begin{aligned}
    -\nabla \cdot A(x)\nabla u(x) &= f(x)\quad \hspace{6pt}\text{\rm in }\Omega, \\
    u(x) & = x_1 \quad \hspace{17pt}\text{\rm on }\partial \Omega,
  \end{aligned}
\end{equation*}
where for $c:=(\frac{1}{2},\frac{1}{2})$ and $r:=0.05$
\begin{align*}
f(x):=\begin{cases} 20 \quad &\mbox{\rm if} \enspace |x-c|\le r\\
 0 \quad &\mbox{\rm else.}
\end{cases}
\end{align*}
The structure of the diffusion coefficient $A$ is depicted in Figure \ref{diffusion_problem_2}.
\end{problem}

The second model problem is devoted to the question on how the LOD is able to catch local properties of the exact solution (such as concentration accumulations) that are generated by an interaction of thin isolating channels and a contrasting boundary condition. The coarse grid is too coarse to capture the channels and too coarse to describe the narrow accumulations of the solution. Again, these effects must be captured and resembled by the local correctors. In model problem 2, the features of the exact solution are generated by a thin isolating frame close to the boundary of the domain (see Figure \ref{diffusion_problem_2}). Within the framed region the solution shows a different behavior to what is prescribed by the boundary condition. Furthermore, energy is pumped into the system by a very local source term $f$. The propagation is distorted by a circular structure that contains rings of high and low conductivity. Again, the FEM reference solution was obtained for a resolution of $h=2^{-8}$.

We start with a visual comparison that is depicted in Figure \ref{mp-2-serie}. The two plots on the left hand side of the figure show the reference solution. The middle and the right picture in the upper row show LOD approximations for $H=2^{-3}$ and the middle and the right picture in the lower row show LOD approximations for $H=2^{-4}$. In both cases, all desired features (in particular the steep and narrow accumulations) are captured by the LOD for patches with only $1$ coarse layer. The results are improved by adding another coarse layer. In this case, almost no difference to the reference solution is visible. This finding is emphasized by Figure \ref{mp-2-isolines-serie} where we can see a direct comparison of the isolines of reference and LOD approximation for $(h,H)=(2^{-8},2^{-4})$ and two coarse grid layers. The isolines are close to perfect matching. The error development in terms of coarse grid layers is given in Table \ref{serie-convergence-mp-2} for $H=2^{-3}$ and $H=2^{-4}$. Since Figures \ref{mp-2-serie} and \ref{mp-2-isolines-serie} predict that $2$ coarse grid layers are sufficient to obtain LOD approximations that are visually almost not distinguishable from the reference solution, this finding should also be recovered from the error table. Indeed, the results in Table \ref{serie-convergence-mp-2} show a fast error reduction within the first two coarse layers, then the error still decreases, but much slower. Adding a third or fourth coarse layer to the patches leads only to small improvements of the approximations and seems to be unnecessary. This finding is in accordance with the results of Theorem \ref{t:a-priori-local} which predict hat the first term in the a priori error estimate (which is of order $H \|f\|_{L^2(\Omega)}$ and $H^2 \|f\|_{L^2(\Omega)}$ respectively) will quickly dominate since the other terms decay exponentially to zero. For the results stated in Table \ref{serie-convergence-mp-2}, this dominance of the order $H$ term is already reached after $2$ coarse grid layers. The conductivity contrast of value $100$ does not lead to a demand for large patch sizes.

\subsection{Model problem 3}

\begin{figure}[h]
\centering
\includegraphics[width=0.8\textwidth]{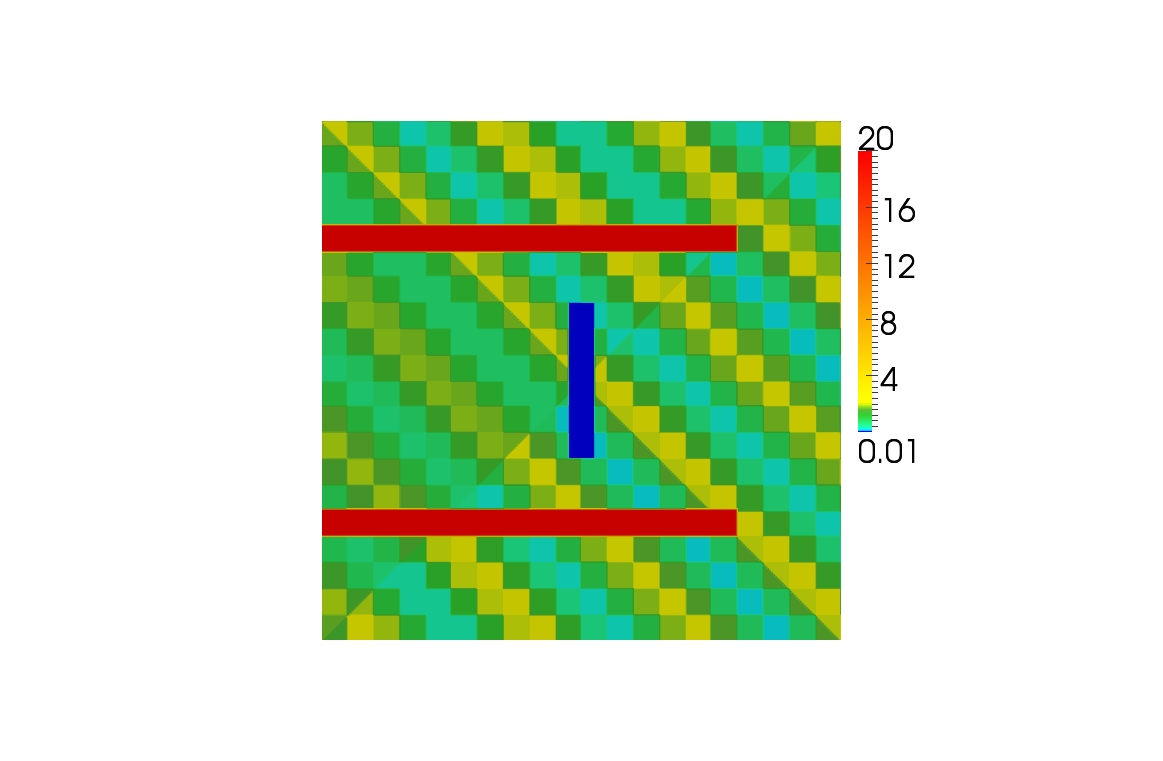}
\caption{\it Model problem 3. Plot of the diffusion coefficient $A$. It consists of a rapidly varying basis structure (green/turquoise/yellow region) given by the equation
$\frac{6}{5} + \frac{1}{2} \sin \left( \left\lfloor x_1 + x_2 \right\rfloor + \left\lfloor \frac{x_1}{\epsilon} \right\rfloor + \left\lfloor \frac{x_2}{\epsilon} \right\rfloor \right)
+ \frac{1}{2} \cos \left( \left\lfloor x_1 - x_2 \right\rfloor + \left\lfloor \frac{x_1}{\epsilon} \right\rfloor + \left\lfloor \frac{x_2}{\epsilon} \right\rfloor \right)$ for $\epsilon=0.05$.
Here, $A$ takes values between $0.2$ and $2.2$. This structure is perturbed by an isolator of thickness $\epsilon$ and length $0.3$ (blue region) and that has a conductivity of $0.01$. Additionally, there are two conductors (red) with high conductivity $20$. These two conductors also have a thickness of $\epsilon$ and a length of $0.8$. They are aligned with the Neumann inflow boundary condition given by (\ref{neumann-inflow-problem-3}).}
\label{diffusion_problem_3}
\end{figure}

\begin{table}[h]\footnotesize
\caption{\it Model problem 3. Computations made for $h=2^{-8}$, i.e. $|\mathcal{T}_h|=131072$ and $|\mathcal{N}_h|=66049$, and $1$ fixed coarse grid element layer for localization. In the second column, the number of fine grid element layers is shown that the coarse layer corresponds with. $|\mathcal{T}_h^\mathcal{U}|$ and $|\mathcal{N}_h^\mathcal{U}|$ are defined in (\ref{def-average-nodes-elements}). The table depicts $L^2$- and $H^1$-errors between $u_h$ and $u_{\LOD}$.}
\label{serie-1-coarse-layer-mp-3}
\begin{center}
\begin{tabular}{|c|c|c|c|c|c|c|}
\hline $H$     & \mbox{Fine layers} & k & $\|u_h - u_{\LOD}\|_{L^2(\Omega)}^{\mbox{\tiny rel}}$ & $\|u_h - u_{\LOD}\|_{H^1(\Omega)}^{\mbox{\tiny rel}}$ & $|\mathcal{T}_h^\mathcal{U}|$ & $|\mathcal{N}_h^\mathcal{U}|$ \\
\hline
\hline $2^{-2}$ & 64   &  1 & $0.02281$ & $0.23212$ & 49120 & 2488 \\
\hline $2^{-3}$ & 32   &  1 & $0.03547$ & $0.23215$ & 14696 & 7525 \\
\hline $2^{-4}$ & 16   &  1 & $0.02794$ & $0.28425$ & 3994   & 2090 \\
\hline $2^{-5}$ & 8     &  1 & $0.02104$ & $0.21349$ & 1037   & 566 \\
\hline
\end{tabular}
\end{center}
\end{table}

\begin{figure}[h]
\centering
\includegraphics[width=1.0\textwidth]{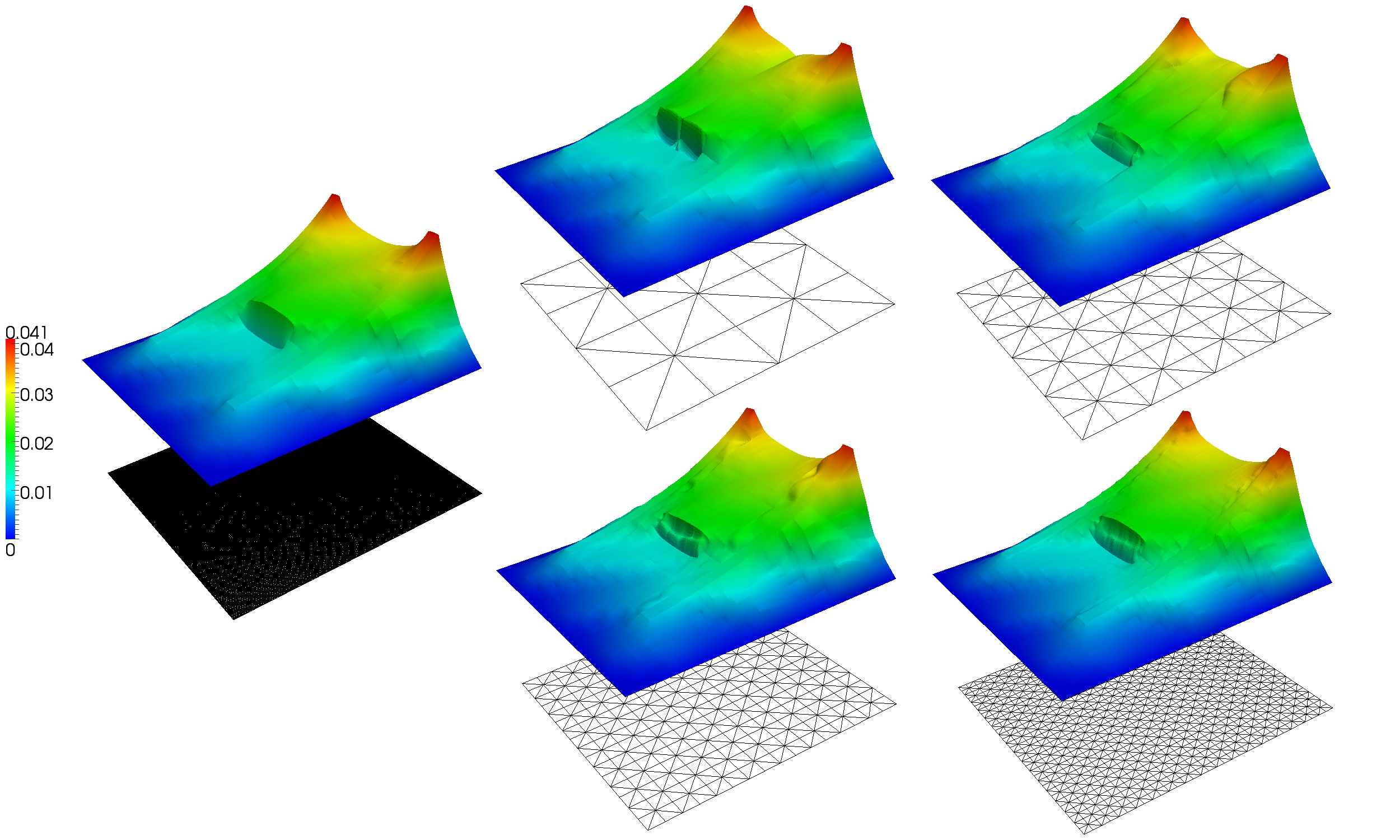}
\caption{\it Model problem 3. Computations made for $h=2^{-8}$ and $k=1$ fixed coarse grid element layer for localization. The left picture shows the FEM reference solution (for $h=2^{-8}$). The remaining pictures show the LOD approximations that correspond with the four results in Table \ref{serie-decay-mp-3}, i.e. they were obtained for the four different coarse grids that are mapped below the plots ($H=2^{-2},2^{-3},2^{-4},2^{-5}$).}
\label{mp-3-serie}
\end{figure}

\begin{table}[h]\footnotesize
\caption{\it Model problem 3. Computations made for  $H=2^{-3}$ and $h=2^{-8}$, i.e. $|\mathcal{T}_h|=131072$ and $|\mathcal{N}_h|=66049$. The first column depicts the number of fine grid element layers. $k$ denotes the corresponding number of coarse element layers. $|\mathcal{T}_h^\mathcal{U}|$ and $|\mathcal{N}_h^\mathcal{U}|$ are defined in (\ref{def-average-nodes-elements}). $L^2$- and $H^1$-errors between $u_h$ and $u_{\LOD}$ are shown.}
\label{serie-decay-mp-3}
\begin{center}
\begin{tabular}{|c|c|c|c|c|c|c|}
\hline \mbox{Fine layers} & k & $\|u_h - u_{\LOD}\|_{L^2(\Omega)}^{\mbox{\tiny rel}}$ & $\|u_h - u_{\LOD}\|_{H^1(\Omega)}^{\mbox{\tiny rel}}$ & $|\mathcal{T}_h^\mathcal{U}|$ & $|\mathcal{N}_h^\mathcal{U}|$ \\
\hline
\hline 4     &  0.125 & 0.21952 & 0.570727 & 2047   & 1102 \\
\hline 8     &  0.25   & 0.15593 & 0.528436 & 3290   & 1738 \\
\hline 16   &  0.5     & 0.09784 & 0.432237 & 6340   & 3291 \\
\hline 32   &  1        & 0.03547 & 0.232147 & 14696 & 7525 \\
\hline
\end{tabular}
\end{center}
\end{table}

Generally, multiscale methods such as HMM or MsFEM have the disadvantage that channels of high conductivity must be resolved by the macro grid in order to get reliable approximations. The reason is the following: if there are long channels of high conductivity in the computational domain, information is transported with high speed from one end of the channel to the other end. Now consider e.g. a local problem with a prescribed homogenous Dirichlet boundary condition on a patch. This problem is a localization of an originally global problem with homogenous Dirichlet boundary condition. Due to the high conductivity channel, the global solution can only decay to zero in a thin region very close to the boundary of the domain. Any interior localization of the solution that intersects the channel will not show a decay behavior. In other words, prescribing a zero boundary condition for a local function that cannot decay to zero on this patch leads to large discrepancy between chosen boundary condition on the patch and the real value on this boundary. The approximations are typically distorted and not reliable. However, if the coarse grid resolves these channel structures, the multiscale basis functions (for e.g. HMM or MsFEM) tend to standard finite element basis functions on the fine grid and the final approximation gets adequate again. An alternative is that the local problems are so large that they contain the full channels.

The situation for the LOD is different. Due to solving the corrector problems in a space that is the kernel of a quasi-interpolation operator, correctors show an intrinsic decay behavior that depends much less on the structure of the diffusion matrix $A$. Imagine that the Cl\'ement-type operator in the definition of $W_h$ (see (\ref{def-W_h})) is replaced by a nodal interpolation operator. Then $W_h$ consists of fine functions that are zero in every coarse grid node. This means that the solutions of the local problems lose repeatedly energy in these nodes. This leads to the previously stated exponential decay, even in the case of high conductivity channels. This consideration shall be emphasized by the following model problem, where we encounter two conductivity channels of width $\epsilon$ in which energy is brought in by a Neumann boundary condition. Additionally, we have a narrow isolator that forms a blockade. The model problem reads as follows.
\begin{problem}
Let $\Omega:= ]0,1[^2$, $\Gamma_N:=\{0\} \times ]0,1[$ and $\Gamma_D:=\partial \Omega \setminus \Gamma_N$.
\begin{equation*}\label{eq:model-3}
  \begin{aligned}
    -\nabla \cdot A\nabla u &= 0\quad \hspace{6pt}\text{\rm in }\Omega, \\
    u & = 0 \quad \hspace{7pt}\text{\rm on }\Gamma_D, \\
    A \nabla u \cdot n &= q \quad \hspace{7pt}\text{\rm on }\Gamma_N,
  \end{aligned}
\end{equation*}
where for $\epsilon:=0.05$
\begin{align}
\label{neumann-inflow-problem-3}q(0,x_2) := \begin{cases}
2 \quad &\mbox{\rm if} \enspace \quad 0.2 \hspace{18pt}\le x_2 \le 0.2 + \epsilon,\\ 
2 \quad &\mbox{\rm if} \enspace \quad 0.8-\epsilon \le x_2 \le 0.8,\\ 
 0 \quad &\mbox{\rm else.}
\end{cases}
\end{align}
The structure of the diffusion coefficient $A$ is depicted in Figure \ref{diffusion_problem_3}.
\end{problem}

We are interested in the behavior of the LOD in the case that none of the localization patches has 'full knowledge' about one of the conductivity channels, i.e. within each patch only a piece of information is accessible. For this purpose, we restrict ourselves to patches that contain maximum one coarse grid layer. We look at uniformly refined coarse grids with $H=2^{-2}$, $H=2^{-3}$, $H=2^{-4}$ and $H^{-5}$, i.e. we neither resolve the structure nor do we use large patches. The corresponding errors are presented in Table \ref{serie-1-coarse-layer-mp-3}, where each computation was performed for exactly one coarse layer. We observe that the $L^2$-errors (respectively $H^1$-errors) are all roughly of the same size. Note that convergence rates cannot be expected, since we fix the number of coarse layers (which leads to a strongly decreasing layer thickness). The results are equally good, independent of how much the coarse grid resolves the structures and independent of how much information from the channels is contained in the patches. This observation is stressed by Figure \ref{mp-3-serie}, where the reference solution and the corresponding LOD approximations are plotted. Each of the LOD approximations captures the information transported along the channels and the steep accumulation generated by the isolator. For $H=2^{-2}$ we see that the transitions are not yet fully smoothed but it improves with decreasing $H$. The approximation obtained for $H=2^{-5}$ comes visually very close to the reference solution. Finally, Table \ref{serie-decay-mp-3} shows that we still have the common error decay in terms of layers.

$\\$
{\bf{Acknowledgements.}} We would like to thank the anonymous reviewers for their constructive feedback and their very valuable and helpful suggestions.

\appendix
\section{Proof of Lemma \ref{lemma-influence-intersections}}

This proof is based in the arguments introduced in \cite{MaPe12} and \cite{Henning:Peterseim:2012} transferred to the general scenario of this work. We require the following lemma (see \cite[Lemma 2.1]{MaPe12} and \cite[Lemma 1]{Henning:Morgenstern:Peterseim:2013}) that characterizes the quasi-interpolation operator $I_H$:
\begin{lemma}
\label{lemma-2-1-from-MaPe12}
There exists a constant $C_1$ that can depend on the shape regularity of $\mathcal{T}_h$ and $\mathcal{T}_H$ but not on the mesh sizes $H$ and $h$, such that for all $v_H \in V_H$ there exists a $v_h \in V_h$ with
\begin{displaymath}
I_H(v_h)= v_H, \quad \|\nabla v_h \|_{L^2(\Omega)} \le C_1 \| \nabla v_H \|_{L^2(\Omega)}, \enspace\enspace\text{and} \enspace\enspace \mbox{\rm supp}(v_h) \subset \mbox{\rm supp}(v_H).
\end{displaymath}
\end{lemma}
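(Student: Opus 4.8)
The plan is to realize $v_h$ as the image of $v_H$ under a local, $H^1$-stable right inverse of $I_H$. Concretely, it suffices to construct a \emph{fine-scale partition of unity} $\{\phi_z^h\}_{z\in\mathcal{N}_H}\subset P_1(\T_h)$ with the following properties: (i) $\operatorname{supp}\phi_z^h\subseteq\omega_z$, where $\omega_z:=\operatorname{supp}\Phi_z$; (ii) $\sum_{z\in\mathcal{N}_H}\phi_z^h\equiv 1$ on $\Omega$; (iii) $\phi_z^h\in V_h$ and $I_H(\phi_z^h)=\Phi_z$ for every free node $z\in\mathring{\mathcal{N}}_H$; and (iv) $\|\nabla\phi_z^h\|_{L^2(\Omega)}\le C\,\|\nabla\Phi_z\|_{L^2(\Omega)}$ with $C$ depending only on shape regularity. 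Given such a family, I would set $v_h:=\sum_{z\in\mathring{\mathcal{N}}_H}v_H(z)\,\phi_z^h$, which lies in $V_h$ by (iii). Since $v_H(z)=0$ for $z\in\mathcal{N}_H\setminus\mathring{\mathcal{N}}_H$, linearity and (iii) give $I_H(v_h)=\sum_{z\in\mathring{\mathcal{N}}_H}v_H(z)\Phi_z=v_H$. For the support inclusion I would use the elementary observation that $v_H(z)\neq 0$ forces $v_H$ to be a nonzero affine function on every coarse element having $z$ as a vertex, so that $\omega_z\subseteq\operatorname{supp}(v_H)$ whenever $v_H(z)\neq 0$; together with (i) this yields $\operatorname{supp}(v_h)\subseteq\operatorname{supp}(v_H)$.

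For the stability bound I would argue element by element. Fix $T\in\T_H$ with barycentre $x_T$. Since $v_H$ vanishes at the Dirichlet vertices and $\phi_y^h$ vanishes on $T$ for $y\notin\overline{T}$, we have $v_h|_T=\sum_{z\in\mathcal{N}_H\cap\overline{T}}v_H(z)\,\phi_z^h|_T$, and by (ii) these $\phi_z^h$ sum to $1$ on $T$; hence $\nabla v_h|_T=\sum_{z\in\mathcal{N}_H\cap\overline{T}}(v_H(z)-v_H(x_T))\,\nabla\phi_z^h|_T$. As $\nabla v_H$ is constant on $T$ we get $|v_H(z)-v_H(x_T)|\le\operatorname{diam}(T)\,\|\nabla v_H\|_{L^\infty(T)}\lesssim H^{1-d/2}\,\|\nabla v_H\|_{L^2(T)}$, while (iv) and $\|\nabla\Phi_z\|_{L^2(\Omega)}\lesssim H^{d/2-1}$ give $\|\nabla\phi_z^h\|_{L^2(T)}\lesssim H^{d/2-1}$; therefore $\|\nabla v_h\|_{L^2(T)}\lesssim\|\nabla v_H\|_{L^2(T)}$, and summing over $T\in\T_H$ yields $\|\nabla v_h\|_{L^2(\Omega)}\lesssim\|\nabla v_H\|_{L^2(\Omega)}$. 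Property (ii) is essential here: without the partition-of-unity telescoping one would only reach a bound through $\big(\sum_{z}|v_H(z)|^2H^{d-2}\big)^{1/2}$, which exceeds $\|\nabla v_H\|_{L^2(\Omega)}$ by a factor $H^{-1}$.

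It remains to construct the family $\{\phi_z^h\}$. I would take $\phi_z^h:=\Phi_z$ for the Dirichlet nodes and $\phi_z^h:=\Phi_z+c_z$ for the free nodes, where the correction $c_z\in V_h$ is built from the fine nodal basis functions associated with the fine-grid nodes lying in the interior of $\omega_z$ and off $\Gamma_D$ --- a non-empty set of nodes because $\T_h$ is assumed to be at least one global refinement of $\T_H$ (it contains $z$ itself and the fine nodes on the coarse edges at $z$). The $c_z$ are required to satisfy the finitely many orthogonality relations $(c_z,\Phi_y)_{L^2(\Omega)}=\delta_{zy}(1,\Phi_z)_{L^2(\Omega)}-(\Phi_z,\Phi_y)_{L^2(\Omega)}$ over the $O(1)$ nodes $y$ with $\omega_y\cap\omega_z\neq\emptyset$ (this encodes (iii)), together with the global balance $\sum_{z\in\mathring{\mathcal{N}}_H}c_z=0$ which, since $\sum_{z\in\mathcal{N}_H}\Phi_z\equiv1$, yields (ii). Each local system has only $O(1)$ constraints against $O((H/h)^d)$ degrees of freedom, so solvability is immediate; the point is to choose the solutions with a uniform bound $\|\nabla c_z\|_{L^2(\Omega)}\lesssim H^{d/2-1}$ that is compatible with the coupling.

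Establishing this uniform bound is the crux, and the step I expect to be the main obstacle. After rescaling $\omega_z$ to unit diameter it becomes a mesh-size-independent inf--sup (Fortin-type) estimate: the functionals $c\mapsto(c,\Phi_y)_{L^2}$ must be uniformly surjective on the $H^1$-unit ball of the interior-supported fine functions, with constants independent of the refinement ratio $h/H$ and of the shape of $\omega_z$. One obtains this by a scaling argument exploiting the shape regularity of $\T_H$ and $\T_h$ together with a quantitative stability estimate for the local spaces involved; this is exactly the content of \cite[Lemma~2.1]{MaPe12} and \cite[Lemma~1]{Henning:Morgenstern:Peterseim:2013}. Enforcing the global balance $\sum_{z}c_z=0$ and the bookkeeping at the Dirichlet nodes are comparatively routine: they can be handled by standard modifications, for instance by solving a suitably coupled family of local problems or by incorporating the balance as an additional constraint in a uniformly well-posed saddle-point formulation.
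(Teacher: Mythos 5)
Your plan would prove the lemma, but it takes a strictly harder route than the paper's, and the hardest step is the one you leave sketched. The paper does not build a fine-scale partition of unity. It constructs, for each coarse node $z$, a single function $b_z\in V_h$ with $I_H(b_z)=\Phi_z$, $\operatorname{supp}(b_z)\subset\operatorname{supp}(\Phi_z)$ and $|\nabla b_z|\le C|\nabla\Phi_z|$ pointwise, and then sets $v_h:=v_H+\sum_{z}\bigl(v_H(z)-I_H(v_H)(z)\bigr)b_z$. The essential difference from your ansatz $v_h=\sum_z v_H(z)\phi_z^h$ lies in the coefficients: since $v_H-I_H(v_H)\in P_1(\T_H)$, a discrete $L^\infty$--$L^2$ bound combined with \eqref{e:interr} gives $|v_H(z)-I_H(v_H)(z)|\lesssim H^{1-d/2}\|\nabla v_H\|_{L^2(\omega_z)}$, so the plain triangle inequality $\|\nabla v_h\|\le\|\nabla v_H\|+\sum_z|v_H(z)-I_H(v_H)(z)|\,\|\nabla b_z\|$ already closes (with finite overlap of the $\omega_z$) without any cancellation between neighbouring correctors. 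Your route instead uses the raw nodal values $v_H(z)$ as coefficients and therefore, as you correctly diagnose, must recover a factor $H$ by telescoping; this is what forces the extra global requirement (ii), $\sum_z\phi_z^h\equiv 1$, i.e.\ $\sum_z c_z=0$ pointwise.

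That global balance is where your proposal falls short of a complete argument. It couples the local corrector problems across the entire mesh (on each coarse element $T$ the correctors of all vertices of $T$ must cancel identically, simultaneously with the moment constraints on overlapping stars), and dismissing it as ``comparatively routine'' or absorbable into ``a uniformly well-posed saddle-point formulation'' is precisely the step that needs proof. Note also that \cite[Lemma~2.1]{MaPe12} does not supply this: it furnishes exactly the uncoupled $b_z$ above (it is essentially the statement you are asked to prove), not a partition of unity. A multiscale partition of unity of the kind you postulate is indeed constructed in \cite{Henning:Morgenstern:Peterseim:2013}, so your route can be completed, but only by importing a stronger result than the lemma requires. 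The fix is simple: keep your properties (i), (iii), (iv) for the individual correctors, drop (ii), and replace the coefficients $v_H(z)$ by $v_H(z)-I_H(v_H)(z)$; the proof then reduces to the paper's.
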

\begin{proof}
The following proof can be found in a more detailed versions in \cite[Lemma 2.1]{MaPe12} and \cite[Lemma 1]{Henning:Morgenstern:Peterseim:2013}. For completeness, we add the main arguments.

For all coarse basis functions $\Phi_z \in V_H$, we search for $b_z \in V_h$ with
$$I(b_z)=\Phi_z,\quad|\nabla b_z(x)| \le C |\nabla \Phi_z(x)| \text{ for $x \in \Omega$ \enspace and}\quad\mbox{\rm supp}(b_z)\subset\mbox{\rm supp}(\Phi_z).$$
This can be achieved by choosing $b_z$ to be an element from the Finite Element space associated with the mesh given by a uniform refinement of $\mathcal{T}_H$, with values $0$ on $\partial(\mbox{supp}\Phi_z)$ and appropriately chosen values on the newly created nodes that can be determined by solving a system of equations (since we assumed that $\mathcal{T}_h$ was obtained by at least one uniform refinement of $\T_H$, $b_z$ will be an element of $V_h$). For an explicit construction of $b_z$, we refer the reader to \cite[Lemma 1]{Henning:Morgenstern:Peterseim:2013}. Finally, the function
$$v_h := v_H + \sum_{z\in\mathcal{N}_H}\left(v_H(z)-I_H(v_H)(z)\right) b_z \in V_h$$
has the desired properties.
\end{proof}

Recall the definition of the coarse layer patches $U_k(T)$ that were introduced in (\ref{def-patch-U-k}). We require suitable cut-off functions that are central for the proof. For $T\in\T_H$ and $\ell,k\in\mathbb{N}$ with $k > \ell$, we define $\eta_{T,k,\ell}\in V_H$ with nodal values
\begin{equation}\label{e:cutoffH}
\begin{aligned}
 \eta_{T,k,\ell}(z) &= 0\quad\text{for all }z\in\mathcal{N}\cap U_{k-\ell}(T),\\
 \eta_{T,k,\ell}(z) &= 1\quad\text{for all }z\in\mathcal{N}\cap \left(\Omega\setminus U_k(T)\right),\text{ and}\\
 \eta_{T,k,\ell} (z)&= \frac{m}{\ell}\quad\text{for all }x\in\mathcal{N}\cap \partial U_{k-\ell+m}(T),\;m=0,1,2,\ldots,\ell.
\end{aligned}
\end{equation}
For a given patch $\omega \subset \Omega$ also recall the definition $$\mathring{W}_h(\omega):=\{ v_h \in W_h| \hspace{2pt} v_h(z)=0 \enspace \mbox{for all } z \in \overline{\mathcal{N}_h \setminus \omega} \}.$$
  
We start with the following lemma, which says that $\eta_{T,k,\ell}w$ with $w \in W_h$ is close to a $W_h$-function.
\begin{lemma}\label{l:cutoff}
For a given $w\in W_h$ and a given cutoff function $\eta_{T,k,\ell}\in P_1(\T_H)$ defined in (\ref{e:cutoffH}) and $k>\ell>0$, there exists some $\tilde{w}\in \mathring{W}_h(\Omega \setminus U_{k-\ell-1}(T)) \subset W_h$  such that
\begin{equation}
\label{lemma-a-1-eq}\|\nabla(\eta_{T,k,\ell}w-\tilde{w})\|_{L^2(\Omega)}\lesssim  \ell^{-1} \|\nabla w\|_{L^2(U_{k+2}(T)\setminus U_{k-\ell-2}(T))}.
\end{equation}
\end{lemma}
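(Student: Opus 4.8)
The plan is to construct $\tilde w$ explicitly and then estimate $\eta w - \tilde w$, writing $\eta := \eta_{T,k,\ell}$ throughout. Since $\eta\in P_1(\T_H)$ and $w\in V_h$, the pointwise product $\eta w$ is quadratic on each fine element and hence is \emph{not} in $V_h$, so I first replace it by its $\T_h$-Lagrange interpolant $w_h:=\mathcal{I}_h(\eta w)\in V_h$; this is legitimate because $\eta w$ vanishes at every fine node on $\Gamma_D$ (as $w$ does). I then apply Lemma~\ref{lemma-2-1-from-MaPe12} to the coarse function $I_H(w_h)\in V_H$ to obtain $z_h\in V_h$ with $I_H(z_h)=I_H(w_h)$, $\|\nabla z_h\|_{L^2(\Omega)}\le C_1\|\nabla I_H(w_h)\|_{L^2(\Omega)}$ and $\operatorname{supp}(z_h)\subset\operatorname{supp}(I_H(w_h))$, and set $\tilde w:=w_h-z_h$. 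By linearity $I_H(\tilde w)=I_H(w_h)-I_H(w_h)=0$, so $\tilde w\in W_h$, and $\eta w-\tilde w=(\eta w-w_h)+z_h$.

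Next I would verify the support condition $\tilde w\in\mathring{W}_h(\Omega\setminus U_{k-\ell-1}(T))$. The key mechanism is that the coefficient of $\Phi_z$ in $I_H(w_h)$, namely $(w_h,\Phi_z)_{L^2}/(1,\Phi_z)_{L^2}$, vanishes whenever $\eta$ is constant on $\operatorname{supp}\Phi_z$: on such a patch $w_h=\mathcal{I}_h(\eta w)$ coincides with a constant multiple of $w$, and $(w,\Phi_z)_{L^2}=0$ since $w\in W_h$. By the nodal definition (\ref{e:cutoffH}), $\eta\equiv 0$ on the coarse patch $U_{k-\ell}(T)$ and $\eta\equiv 1$ on $\Omega\setminus U_k(T)$, which forces the coefficient to vanish for every coarse node $z\in U_{k-\ell-1}(T)$ and for every $z$ far enough outside $U_k(T)$; tracing which coarse-element layer the remaining patches $\operatorname{supp}\Phi_z$ can reach then shows that $I_H(w_h)$, and hence $z_h$, is supported in a bounded enlargement of the transition annulus $U_k(T)\setminus U_{k-\ell}(T)$, while $w_h$ itself vanishes at all fine nodes of $U_{k-\ell}(T)$. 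Together these give that $\tilde w$ vanishes at every fine node of $U_{k-\ell-1}(T)$.

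For the quantitative bound I would estimate the two pieces of $\eta w-\tilde w$ separately. For the interpolation error, elementwise $\|\nabla(\eta w-w_h)\|_{L^2(\tau)}\lesssim h_\tau\,|\eta w|_{H^2(\tau)}\lesssim h_\tau\,\|\nabla\eta\|_{L^\infty(\tau)}\,\|\nabla w\|_{L^2(\tau)}$, and $\|\nabla\eta\|_{L^\infty}\lesssim(\ell H)^{-1}$ because $\eta$ climbs from $0$ to $1$ across the $\ell$ coarse-element layers of the transition zone, changing by at most $\ell^{-1}$ between consecutive layers; summing over the (only relevant) fine elements in that zone yields a bound of the required form. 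For $z_h$, $\|\nabla z_h\|_{L^2(\Omega)}\le C_1\|\nabla I_H(w_h)\|_{L^2(\Omega)}$, and for each contributing node $z$ I rewrite the coefficient as $(w_h-\eta(z)w,\Phi_z)_{L^2}/(1,\Phi_z)_{L^2}$ (again using $(w,\Phi_z)_{L^2}=0$); since $\|w_h-\eta(z)w\|_{L^2(\operatorname{supp}\Phi_z)}=\|\mathcal{I}_h((\eta-\eta(z))w)\|_{L^2(\operatorname{supp}\Phi_z)}\lesssim\ell^{-1}\|w\|_{L^2(\operatorname{supp}\Phi_z)}$, the local scalings $\|\Phi_z\|_{L^2}/\|\Phi_z\|_{L^1}\sim H_z^{-d/2}$ and $\|\nabla\Phi_z\|_{L^2}\sim H_z^{d/2-1}$ together with the finite overlap of the patches give $\|\nabla I_H(w_h)\|_{L^2(\Omega)}^2\lesssim\ell^{-2}\sum_{T'}H_{T'}^{-2}\|w\|_{L^2(T')}^2$, the sum over coarse elements $T'$ in a bounded neighbourhood of the transition annulus. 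Finally, because $I_Hw=0$, the approximation property (\ref{e:interr}) gives $H_{T'}^{-1}\|w\|_{L^2(T')}=H_{T'}^{-1}\|w-I_Hw\|_{L^2(T')}\lesssim\|\nabla w\|_{L^2(\omega_{T'})}$; inserting this and collecting both pieces produces (\ref{lemma-a-1-eq}) with the region $U_{k+2}(T)\setminus U_{k-\ell-2}(T)$, the two extra layers on each side accounting for the patch overlaps $\operatorname{supp}\Phi_z$ and the enlargements $\omega_{T'}$.

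The main obstacle is the combinatorial bookkeeping of supports through coarse-element layers: pinning down exactly which nodes $z$ contribute to $I_H(w_h)$, how far their patches $\operatorname{supp}\Phi_z$ and the enlargements $\omega_{T'}$ reach, and thereby simultaneously obtaining that $\tilde w$ is genuinely supported in $\Omega\setminus U_{k-\ell-1}(T)$ and that the right-hand side only sees $w$ on $U_{k+2}(T)\setminus U_{k-\ell-2}(T)$. The sharp $\ell^{-1}$ gain, which comes from the oscillation estimate $|\eta-\eta(z)|\lesssim\ell^{-1}$ on each patch combined with the mean-value cancellation $(w,\Phi_z)_{L^2}=0$, is the other point requiring care; the remaining steps are the routine local inverse-estimate and finite-overlap arguments.
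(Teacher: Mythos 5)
Your construction is exactly the paper's: $\tilde w=\mathcal{I}_h(\eta w)-v$ with $v$ the Lemma~\ref{lemma-2-1-from-MaPe12} lift of $I_H(\mathcal{I}_h(\eta w))$, and both of your estimates rest on the same two mechanisms the paper uses --- the $O(\ell^{-1})$ oscillation of $\eta$ over a coarse patch and the cancellation $I_Hw=0$ (equivalently $(w,\Phi_z)_{L^2}=0$) --- so the argument is correct and essentially identical, your coefficientwise bound for $\|\nabla I_H\mathcal{I}_h(\eta w)\|$ and your more explicit support argument for $z_h$ being variants of the paper's use of the stability of $I_H$ and $\mathcal{I}_h$ after subtracting the local mean $c_K^{\ell}$. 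The one detail to add is that on quadrilateral meshes $|\eta w|_{H^2(\tau)}$ is \emph{not} controlled by $\|\nabla\eta\|_{L^\infty(\tau)}\|\nabla w\|_{L^2(\tau)}$ alone, since $\eta\,\partial_{x_1}\partial_{x_2}w\neq 0$ for bilinear $w$; as in the paper one first replaces $\eta$ by $\eta-c$ (the term $cw$ being interpolated exactly) and invokes the inverse estimate $\|\nabla^2 w\|_{L^2(S)}\lesssim h^{-1}\|\nabla w\|_{L^2(S)}$, which is the same subtraction trick you already deploy for the $I_H$ term.
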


\begin{proof}
We fix the element $T\in\T_H$ and $k\in\mathbb{N}$ and denote $\eta_\ell:=\eta_{T,k,\ell}$ and $c_K^{\ell} := |\omega_K|^{-1}\int_{\omega_K} \eta_\ell$ for $K\in \T_H$. Here, we define $\omega_K:=\cup\{K'\in\T_H\;\vert\;K'\cap K\neq\emptyset\}$. The operator $\mathcal{I}_h:H^1(\Omega)\cap C(\bar{\Omega})\rightarrow  P_1(\T_h)$ shall define the classical linear Lagrange interpolation operator with respect to $\T_h$. Lemma \ref{lemma-2-1-from-MaPe12} yields that there exists some $v\in V^h$ such that 
\begin{align}\label{e:lackproj}
\Ic v=\Ic \mathcal{I}_h(\eta_\ell w),\;\|\nabla v\|_{L^2(\Omega)}&\lesssim \|\nabla \Ic \mathcal{I}_h(\eta_\ell w)\|_{L^2(\Omega)},\;\text{and }\\\nonumber\operatorname{supp} (v)\subset \operatorname{supp} (\eta_\ell w) &\subset \Omega \setminus U_{k-\ell-1}(T).  
\end{align}
We can therefore define $\tilde{w}:=\mathcal{I}_h(\eta_\ell w)-v\in\oVf(\Omega \setminus U_{k-\ell-1}(T))$. Using (\ref{e:interr}) and $I_H(\mathcal{I}_h(w))=I_H(w)=0$ we obtain for any $K\in \T_H$
\begin{align}
\label{loc-lagrange-stability}\|\nabla \Ic \mathcal{I}_h(\eta_\ell w)\|_{L^2(K)} = \|\nabla \Ic \mathcal{I}_h((\eta_\ell -c_K^{\ell})w)\|_{L^2(K)} \lesssim \|\nabla ((\eta_\ell -c_K^{\ell})w)\|_{L^2(\omega_K)}.
\end{align}
Note that we used that the Lagrange interpolation operator $\mathcal{I}_h$ is $H^1$-stable on shape-regular partitions when it is restricted to piecewise polynomials of a fixed (small) degree (the stability constant only blows up to infinity, when the polynomial degree blows up to infinity, here the degree is bounded by $3$). This gives us
\begin{eqnarray}
\nonumber\lefteqn{\|\nabla \Ic \mathcal{I}_h(\eta_\ell w)\|_{L^2(\Omega)}^2}\\
\nonumber&\overset{\eqref{e:cutoffH},\eqref{loc-lagrange-stability}}{\lesssim}& \underset{K\subset \overline{U_{k+1}(T)}\setminus U_{k-\ell-1}(T)}{\sum_{K\in\T_H:}} \left\|\nabla\left(\left(\eta_\ell- c_K^{\ell} \right)w\right)\right\|_{L^2(\omega_K)}^2 \\
\nonumber&\lesssim& \underset{K\subset \overline{U_{k+1}(T)}\setminus U_{k-\ell-1}(T)}{\sum_{K\in\T_H:}}  \left\| (\nabla \eta_\ell)( w - I_H w) \right\|_{L^2(\omega_K)}^2 + \left\| \left(\eta_\ell- c_K^{\ell} \right)\nabla w \right\|_{L^2(\omega_K)}^2 \\
\nonumber&\overset{\eqref{e:cutoffH}}{\lesssim}& \underset{K\subset \overline{U_{k}(T)}\setminus U_{k-\ell}(T)}{\sum_{K\in\T_H:}} \hspace{-10pt} \left\| (\nabla \eta_\ell)( w - I_H w) \right\|_{L^2(K)}^2 \hspace{10pt}+ \hspace{-20pt} \underset{K\subset \overline{U_{k+1}(T)}\setminus U_{k-\ell-1}(T)}{\sum_{K\in\T_H:}} \hspace{-30pt} \left\| \left(\eta_\ell- c_K^{\ell} \right)\nabla w \right\|_{L^2(\omega_K)}^2 \\
\nonumber&\overset{(\ref{e:interr})}{\lesssim}& \|H \nabla \eta_\ell\|^2_{L^\infty(\Omega)} \| \nabla w  \|_{L^2(U_{k+1}(T)\setminus U_{k-\ell-1}(T))}^2 + \hspace{-20pt} \underset{K\subset \overline{U_{k+1}(T)}\setminus U_{k-\ell-1}(T)}{\sum_{K\in\T_H:}}\hspace{-30pt} \left\| \left(\eta_\ell- c_K^{\ell} \right)\nabla w \right\|_{L^2(\omega_K)}^2\\
\nonumber&{\lesssim}& \|H \nabla \eta_\ell\|^2_{L^\infty(\Omega)} \| \nabla w  \|_{L^2(U_{k+2}(T)\setminus U_{k-\ell-2}(T))}^2,\\
\label{e:ferlg}
\end{eqnarray}
where we used the Lipschitz bound $\| \eta_\ell- c_K^{\ell} \|_{L^{\infty}(\omega_K)} \lesssim H \| \nabla \eta_\ell \|_{L^{\infty}(\omega_K)}$. Recall the local $H^1$-estimate for the for the Lagrange interpolation operator on shape-regular partitions (c.f. \cite{Dubach:Luce:Thomas:2009} for quadrilaterals and hexahedra):
\begin{equation}\label{e:intesth}
 \|\nabla (p-\mathcal{I}_h p)\|_{L^2(S)}\lesssim h_S\|\nabla^2 p\|_{L^2(S)}
\end{equation}
for all $p\in C^0(\overline{S})\cap H^2(S)$ and $S\in\T_h$. Using this, $\mathcal{I}_h(w)=w$ and $I_H(w) = 0$ we get:
\begin{eqnarray}
\label{e:ferlg:2}\nonumber\lefteqn{\|\nabla(\eta_\ell w-\mathcal{I}_h(\eta_\ell w))\|_{L^2(\Omega)}^2 = \sum_{K \in \T_H} \|\nabla((\eta_\ell -c_K^{\ell}) w-\mathcal{I}_h((\eta_\ell -c_K^{\ell}) w))\|_{L^2(K)}^2 }\\
\nonumber&\lesssim& h^2 \hspace{-2pt} \sum_{K \in \T_H} \|\nabla^2 \eta_\ell (w-I_H(w))\|_{L^2(K)}^2 \hspace{-1pt} + \hspace{-1pt} \|\nabla \eta_\ell \cdot \nabla w\|_{L^2(K)}^2 \hspace{-1pt} + \hspace{-3pt} \underset{S\subset K}{\sum_{S\in \T_h:}} \|(\eta_\ell -c_K^{\ell}) \nabla^2 w\|_{L^2(K)}^2\\
\nonumber&\overset{(\ast)}{\lesssim}& h^2 \hspace{-10pt}\underset{K\subset \overline{U_{k+1}(T)}\setminus U_{k-\ell-1}(T)}{\sum_{K\in\T_H:}} \hspace{-15pt} \|\nabla \eta_\ell \|_{L^{\infty}(K)}^2 \| \nabla w \|_{L^2(K)}^2 + H^2 \| \nabla \eta_\ell \|_{L^{\infty}(\omega_K)}^2 \underset{S\subset K}{\sum_{S\in \T_h:}} h^{-2} \|\nabla w\|_{L^2(S)}^2\\
\nonumber&\lesssim& \|(h+H)\nabla\eta_\ell\|^2_{L^\infty(\Omega)} \| \nabla w  \|_{L^2(U_{k+1}(T)\setminus U_{k-\ell-1}(T))}^2\\
\end{eqnarray}
In ($\ast$) we used the obvious estimate $\|\nabla^2 \eta_{\ell} \|_{L^{\infty}(K)} \lesssim H^{-1} \|\nabla \eta_{\ell} \|_{L^{\infty}(K)}$ and the inverse estimate $\|\nabla^2 w\|_{L^2(S)} \lesssim h^{-1} \|\nabla w\|_{L^2(S)}$ (c.f. \cite{Brenner:Carstensen:2004}). Combing \eqref{e:ferlg} and \eqref{e:ferlg:2} yields:
\begin{eqnarray*}
\lefteqn{\|\nabla(\eta_\ell w-\tilde{w})\|_{L^2(\Omega)}^2\overset{\eqref{e:lackproj}}{\lesssim} \|\nabla(\eta_\ell w-\mathcal{I}_h(\eta_\ell w))\|_{L^2(\Omega)}^2 + \|\nabla \Ic \mathcal{I}_h(\eta_\ell w)\|_{L^2(\Omega)}^2}\\
&\overset{\eqref{e:ferlg},\eqref{e:ferlg:2}}{\lesssim}& \left(\|h\nabla\eta_\ell\|^2_{L^\infty(\Omega)}+\|H \nabla \eta_\ell\|^2_{L^\infty(\Omega)}\right) \|\nabla w\|^2_{L^2(U_{k+2}(T)\setminus U_{k-\ell-2}(T))}\\
&\overset{\eqref{e:cutoffH}}{\lesssim}& \ell^{-2}\|\nabla w\|^2_{L^2(U_{k+2}(T)\setminus U_{k-\ell-2}(T))}.
\end{eqnarray*}
This ends the proof.
\end{proof}

The following lemma describes the decay of the solutions of ideal corrector problems (i.e. problems such as (\ref{local-boundary-corrector-problem}) and (\ref{local-corrector-problem}) for $U(T)=\Omega$).
\begin{lemma}\label{l:decay}
Let $T\in \T_H$ be fixed and let $p_h^T \in W_h$ be the solution of
\begin{align}
\label{generalized-corrector-problem}\int_{\Omega} A \nabla p_h^T \cdot \nabla \phi_h =F_T(\phi_h) \qquad \mbox{for all } \phi_h \in W_h
\end{align}
where $F_T\in W_h^{\prime}$ is such that $F_T(\phi_h)=0$ for all $\phi_h \in \mathring{W}_h(\Omega \setminus T)$. Then, there exists a generic constant $0<\theta < 1$ (depending on the contrast) such that for all positive $k\in\mathbb{N}$:
\begin{align}
\label{lemma-a-2-eq}\|\nabla p_h^T \|_{L^2(\Omega\setminus U_k(T))}\lesssim \theta^{k}\|\nabla p_h^T \|_{L^2(\Omega)}.
\end{align}
\end{lemma}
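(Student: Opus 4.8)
The plan is to establish (\ref{lemma-a-2-eq}) by a Gärding-type iteration over shells of coarse element layers, driven by the cutoff functions $\eta_{T,k,\ell}$ from (\ref{e:cutoffH}) and the approximation estimate of Lemma~\ref{l:cutoff}. Throughout write $p:=p_h^T$ and $a_j:=\|\nabla p\|_{L^2(\Omega\setminus U_j(T))}^2$; since $U_0(T)=T\subset\Omega$ we have $a_0=\|\nabla p\|_{L^2(\Omega)}^2$, so it suffices to prove $a_k\lesssim\theta^{2k}\,a_0$.

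First I would fix a number $\ell\in\mathbb{N}_{>0}$ of transition layers (to be chosen at the very end, depending only on the contrast $\beta/\alpha$) and treat only $k$ larger than a fixed $\ell$-dependent threshold, the finitely many remaining values of $k$ being covered trivially after enlarging the hidden constant. Put $\eta:=\eta_{T,k,\ell}$; by construction $0\le\eta\le 1$, $\eta\equiv 1$ on $\Omega\setminus U_k(T)$, $\eta\equiv 0$ on $U_{k-\ell}(T)$, $\support(\nabla\eta)\subset\overline{U_k(T)}\setminus U_{k-\ell}(T)$ and $\|H\nabla\eta\|_{L^\infty(\Omega)}\lesssim\ell^{-1}$. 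Using $0\le\eta\le 1$ and the spectral bounds (A3),
\[
\alpha\,a_k\le\int_\Omega\eta\,A\nabla p\cdot\nabla p=\int_\Omega A\nabla p\cdot\nabla(\eta p)-\int_\Omega(A\nabla p\cdot\nabla\eta)\,p=:\mathrm{I}+\mathrm{II}.
\]
For $\mathrm{I}$, apply Lemma~\ref{l:cutoff} to obtain $\tilde w\in\mathring{W}_h(\Omega\setminus U_{k-\ell-1}(T))\subset W_h$ with $\|\nabla(\eta p-\tilde w)\|_{L^2(\Omega)}\lesssim\ell^{-1}\|\nabla p\|_{L^2(U_{k+2}(T)\setminus U_{k-\ell-2}(T))}$; since $\tilde w\in\mathring{W}_h(\Omega\setminus U_{k-\ell-1}(T))\subset\mathring{W}_h(\Omega\setminus T)$, the hypothesis on $F_T$ gives $F_T(\tilde w)=0$, hence $\int_\Omega A\nabla p\cdot\nabla\tilde w=0$ by (\ref{generalized-corrector-problem}) and therefore $\mathrm{I}=\int_\Omega A\nabla p\cdot\nabla(\eta p-\tilde w)$; Cauchy--Schwarz, together with $\support\nabla(\eta p-\tilde w)\subset\Omega\setminus U_{k-\ell-1}(T)$, then gives $\mathrm{I}\lesssim\beta\,\ell^{-1}\,a_{k-\ell-2}$. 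For $\mathrm{II}$, on the shell $\overline{U_k(T)}\setminus U_{k-\ell}(T)$ one has $|\nabla\eta|\lesssim(\ell H)^{-1}$, and since $p\in W_h$ means $I_H p=0$, the local estimate (\ref{e:interr}) yields $\|p\|_{L^2(K)}\lesssim H\|\nabla p\|_{L^2(\omega_K)}$ for every coarse element $K$ of the shell; summing over these $K$ (finite overlap of the $\omega_K$ costs only a constant) gives $\mathrm{II}\lesssim\beta\,\ell^{-1}\,a_{k-\ell-1}\le\beta\,\ell^{-1}\,a_{k-\ell-2}$.

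Collecting the two bounds produces the one-step recursion $a_k\le C\,\tfrac{\beta}{\alpha\,\ell}\,a_{k-\ell-2}$ with $C$ independent of $H$, $h$ and of the variations of $A$. Now I would fix $\ell:=\lceil 2C\beta/\alpha\rceil$, so that the contraction factor $\mu:=C\beta/(\alpha\ell)\le\tfrac12<1$, and iterate $a_k\le\mu\,a_{k-\ell-2}$ about $\lfloor k/(\ell+2)\rfloor$ times, stopping at some $a_j$ with $j$ below the threshold and using $a_j\le a_0$. This yields $a_k\le\mu^{\,\lfloor k/(\ell+2)\rfloor}a_0\lesssim\theta^{2k}a_0$ with $\theta:=\mu^{1/(2(\ell+2))}\in(0,1)$; taking square roots gives (\ref{lemma-a-2-eq}), and $\theta$ depends on $\beta/\alpha$ but not on $H$, $h$ or the fine-scale structure of $A$, as asserted.

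The main point requiring care is the shell bookkeeping: each use of Lemma~\ref{l:cutoff} and of (\ref{e:interr}) enlarges the region appearing on the right by a bounded, $k$-independent number of coarse layers, so one must check that the recursion genuinely closes, i.e. that $a_{k-\ell-2}$ on the right is a controlled enlargement of the region $\Omega\setminus U_k(T)$ on the left and that the step length $\ell+2$ stays fixed along the iteration. The only other subtlety is that the per-step constant unavoidably carries the contrast factor $\beta/\alpha$ coming from passing between the $A$-weighted and unweighted Dirichlet norms --- this is exactly why $\ell$, and hence $\theta$, must be chosen contrast-dependent; the remaining ingredients (the gradient bound on $\eta$, $H^1$-stability of the interpolants, finite overlap of coarse patches) are standard and uniform in the mesh sizes.
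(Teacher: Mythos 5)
Your proposal is correct and follows essentially the same route as the paper's proof: multiply by the coarse cutoff $\eta_{T,k,\ell}$, use Lemma~\ref{l:cutoff} to replace $\eta p$ by a $W_h$-function supported away from $T$ so that $F_T$ annihilates it, control the commutator term $p\nabla\eta$ via $I_Hp=0$ and (\ref{e:interr}), and iterate the resulting contraction over shells with $\ell$ chosen proportional to the contrast. The only differences from the paper are bookkeeping conventions (the paper shifts the cutoff indices to $\eta_{T,k-2,\ell-4}$ rather than tracking the layer enlargements explicitly, and picks $\ell=\lceil eC\beta/\alpha\rceil$ instead of your factor $2$), which are immaterial.
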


\begin{proof}
The proof is analogous to \cite{MaPe12} and \cite{Henning:Peterseim:2012}. Let us fix $k\in\mathbb{N}$ and $\ell\in\mathbb{N}$ with $\ell< k-1$. We denote $\eta_\ell:=\eta_{T,k-2,\ell-4}\in P_1(\T_H)$ (as in \eqref{e:cutoffH}). 
Applying Lemma~\ref{l:cutoff} gives us the existence of $\tilde{p}_h^T\in  \mathring{W}_h(\Omega \setminus U_{k-\ell+1}(T))$ with $\|\nabla(\eta_\ell p_h^T -\tilde{p}_h^T)\|_{L^2(\Omega)}\lesssim  \ell^{-1} \|\nabla p_h^T\|_{L^2(U_{k}(T)\setminus U_{k-\ell}(T))}$. Due to $\tilde{p}_h^T\in  \mathring{W}_h(\Omega \setminus T)$ and the assumptions on $F_T$ we also have
\begin{align}
\label{lemm-4-4-step-1} \int_{\Omega \setminus U_{k-\ell}(T)} A \nabla p_h^T \cdot \nabla \tilde{p}_h^T =  \int_{\Omega} A \nabla p_h^T \cdot \nabla \tilde{p}_h^T = F_T ( \tilde{p}_h^T) = 0.
\end{align}
This leads to
\begin{eqnarray*}
\lefteqn{\int_{\Omega\setminus U_k(T)}A\nabla p_h^T \cdot \nabla p_h^T}\\
&\leq& \int_{\Omega\setminus U_{k-\ell}(T)}\eta_\ell A\nabla p_h^T \cdot \nabla p_h^T\\
&=& \int_{\Omega\setminus U_{k-\ell}(T)} A\nabla p_h^T\cdot \left(\nabla (\eta_\ell p_h^T)-p_h^T\nabla\eta_\ell\right)\\
&\overset{(\ref{lemm-4-4-step-1})}{=}& \int_{\Omega\setminus U_{k-\ell}(T)} A\nabla p_h^T\cdot \left(\nabla (\eta_\ell p_h^T-\tilde{p}_h^T)-(p_h^T-\underset{=0}{\underbrace{\Ic(p_h^T)}})\nabla\eta_\ell\right)\\
&\lesssim& \ell^{-1} \left(\|\nabla p_h^T \|_{L^2(\Omega \setminus U_{k-l}(T))}^2 + \right. \\
&\enspace& \qquad \left. H^{-1} \|\nabla p_h^T \|_{L^2(\Omega \setminus U_{k-\ell}(T))} \|p_h^T - I_H(p_h^T)\|_{L^2(\Omega \setminus U_{k-\ell}(T))} \right)\\
&\lesssim& \ell^{-1} \|\nabla p_h^T \|_{L^2(\Omega \setminus U_{k-\ell-1}(T))}^2.
\end{eqnarray*}
This implies that there exists a constant $C$ independent of $T$, $\ell$, $k$ and $A$, such that
\begin{equation}\label{e:decay}
\|\nabla p_h^T \|_{L^2(\Omega\setminus U_{k}(T))}^2\leq C \frac{\beta}{\alpha} \ell^{-1} \|\nabla p_h^T \|_{L^2(\Omega\setminus U_{k-\ell-1}(T))}^2.
\end{equation}
A recursive application of this inequality with the choice of $\ell:=\lceil eC\frac{\beta}{\alpha}\rceil$ yields
\begin{align*}
\|\nabla p_h^T \|_{L^2(\Omega\setminus U_{k}(T))}^2 \lesssim e^{-k/(\ell+3)} \|\nabla p_h^T \|_{L^2(\Omega)}^2.
\end{align*}
The choice $\theta:=e^{-(\lceil eC\frac{\beta}{\alpha}\rceil+3)^{-1}}$ proves the lemma.
\end{proof}

We are now prepared to prove the decay lemma:

\begin{proof}[Proof of Lemma \ref{lemma-influence-intersections}]
Again, the proof is analogous to \cite{Henning:Peterseim:2012}. We let $\eta_{T,k,1}$ be defined according to \eqref{e:cutoffH} and denote $z:=\sum_{T \in \T_H}(p_h^T-p_h^{T,k}) \in W_h$ (which again implies $I_H(z)=0$). We obtain
\begin{eqnarray*}
\lefteqn{\left\| A^{1/2} \nabla z \right\|_{L^2(\Omega)}^2}\\
&=& \sum_{T\in\T_H} \underset{=:\mbox{I}}{\underbrace{(A\nabla (p_h^T-p_h^{T,k}), \nabla (z (1- \eta_{T,k,1})))_{L^2(\Omega)}}} + \underset{=:\mbox{II}}{\underbrace{(A\nabla (p_h^T-p_h^{T,k}), \nabla(z \eta_{T,k,1}))_{L^2(\Omega)}}},
\end{eqnarray*}
where
\begin{eqnarray*}
\lefteqn{\mbox{I} \le \| \nabla (p_h^T-p_h^{T,k} )\|_{L^2(\Omega)} \| \nabla \left( z (1- \eta_{T,k,1}) \right) \|_{L^2(U_{k+1}(T))}}\\
&\le& \| \nabla( p_h^T-p_h^{T,k} )\|_{L^2(\Omega)} \left(  \| \nabla z \|_{L^2(U_{k+1}(T))} + \| z \nabla \left( 1- \eta_{T,k,1} \right) \|_{L^2(U_{k+1}(T) \setminus U_{k}(T) )} \right)\\
&\lesssim& \| \nabla( p_h^T-p_h^{T,k} )\|_{L^2(\Omega)} \left( \| \nabla z \|_{L^2(U_{k+1}(T))} + \frac{1}{H} \| z - I_H(z) \|_{L^2(U_{k+1}(T) \setminus U_{k}(T) )} \right)\\
&\lesssim& \| \nabla (p_h^T-p_h^{T,k}) \|_{L^2(\Omega)} \| \nabla z \|_{L^2(U_{k+2}(T))} 
\end{eqnarray*}
and again with Lemma \ref{l:cutoff} which gives us $\tilde{z}\in \mathring{W}_h(\Omega \setminus U_{k-2}(T))$ with the properties $\int_{\Omega} A \nabla (p_h^T-p_h^{T,k}) \cdot \nabla \tilde{z}=0$ and $\| \nabla (z \eta_{T,k,1} - \tilde{z}) \|_{L^2(\Omega)} \lesssim \| \nabla z \|_{L^2( U_{k+2}(T) )}$ and therefore
\begin{eqnarray*}
\mbox{II} = (A\nabla (p_h^T-p_h^{T,k}), \nabla((z \eta_{T,k,1})-\tilde{z})_{L^2(\Omega)}\lesssim \| \nabla (p_h^T-p_h^{T,k}) \|_{L^2(\Omega)} \| \nabla z \|_{L^2(U_{k+2}(T))}.
\end{eqnarray*}
Combining the estimates for I and II finally yields
\begin{eqnarray}
\label{lemma-a-3-proof-eq-0}\left\| A^{1/2} \nabla z \right\|_{L^2(\Omega)}^2
&\lesssim& \sum_{T\in\T_H} \| A^{1/2} \nabla (p_h^T-p_h^{T,k}) \|_{L^2(\Omega)} \| \nabla z \|_{L^2(U_{k+2}(T))}\\
\nonumber&\lesssim& k^{\frac{d}{2}} \left( \sum_{T\in\T_H} \| \nabla (p_h^T-p_h^{T,k}) \|_{L^2(\Omega)}^2 \right)^{\frac{1}{2}} \| A^{1/2} \nabla z \|_{L^2(\Omega)}.
\end{eqnarray}
It remains to bound $\| \nabla (p_h^T-p_h^{T,k}) \|_{L^2(\Omega)}^2$. In order to do this, we use Galerkin orthogonality for the  local problems, which gives us
\begin{align}
\label{galerkin-orthogonality-local-eq}\| \nabla (p_h^T-p_h^{T,k}) \|_{L^2(\Omega)}^2 \lesssim \inf_{\tilde{p}_h^{T,k} \in \mathring{W}_h(U_k(T))} \| \nabla (p_h^T-\tilde{p}_h^{T,k}) \|_{L^2(\Omega)}^2.
\end{align}
Again, we use Lemma \ref{lemma-2-1-from-MaPe12} which yields the existence of $\tilde{v}\in V^h$ such that 
\begin{align*}
\Ic \tilde{v}=\Ic \mathcal{I}_h((1-\eta_{T,k,1})p_h^T),\;\|\nabla \tilde{v}\|_{L^2(\Omega)}&\lesssim \|\nabla \Ic \mathcal{I}_h((1-\eta_{T,k,1})p_h^T)\|_{L^2(\Omega)},\;\text{and }\\\nonumber\operatorname{supp} (\tilde{v})\subset \operatorname{supp} ((1-\eta_{T,k,1})p_h^T) &\subset U_{k}(T).
\end{align*}
We can therefore define $\tilde{p}_h^T:=\mathcal{I}_h((1-\eta_{T,k,1})p_h^T)-\tilde{v}\in\oVf(U_{k}(T))$ and make two observations:
\begin{eqnarray}
\nonumber\lefteqn{\|\nabla \Ic \mathcal{I}_h((1-\eta_{T,k,1})p_h^T)\|_{L^2(U_k(T))}^2}\\
\nonumber&=& \|\nabla \Ic \mathcal{I}_h((1-\eta_{T,k,1})p_h^T)\|_{L^2(U_k(T)\setminus U_{k-2}(T))}^2 + \|\nabla \Ic \mathcal{I}_h((1-\eta_{T,k,1})p_h^T)\|_{L^2(U_{k-2}(T))}^2\\
\nonumber&=& \|\nabla \Ic \mathcal{I}_h((1-\eta_{T,k,1})p_h^T)\|_{L^2(U_k(T)\setminus U_{k-2}(T))}^2 + \|\nabla \Ic p_h^T\|_{L^2(U_{k-2}(T))}^2 \\
\nonumber&=& \|\nabla \Ic \mathcal{I}_h((1-\eta_{T,k,1})p_h^T)\|_{L^2(U_k(T)\setminus U_{k-2}(T))}^2\\
\label{lemma-a-3-proof-eq-1}
\end{eqnarray}
and
\begin{align}
\label{lemma-a-3-proof-eq-2}\| \nabla \left( (1-\eta_{T,k,1})p_h^T - \mathcal{I}_h((1-\eta_{T,k,1})p_h^T) \right) \|_{L^2(U_k(T)}^2 &\lesssim& \|\nabla p_h^T\|_{L^2(U_{k+1}(T) \setminus U_{k-2}(T))}^2,
\end{align}
which can be proved in the same way as equation (\ref{e:ferlg}) in Lemma \ref{l:cutoff}. Recall $\tilde{p}_h^T=\mathcal{I}_h((1-\eta_{T,k,1})p_h^T)-\tilde{v}$, then altogether we obtain
\begin{eqnarray}
\nonumber \lefteqn{\| \nabla (p_h^T-p_h^{T,k}) \|_{L^2(\Omega)}^2} \\
\nonumber  &\overset{(\ref{galerkin-orthogonality-local-eq})}{\lesssim}& \| \nabla (\eta_{T,k,1} p_h^T+ (1-\eta_{T,k,1})p_h^T - \tilde{p}_h^T)  \|_{L^2(\Omega)}^2 \\
\nonumber &\overset{(\ref{lemma-a-3-proof-eq-2})}{\lesssim}& \|\nabla p_h^T\|_{L^2(\Omega \setminus U_k(T))}^2 +  \|\nabla p_h^T\|_{L^2(U_{k+1}(T) \setminus U_{k-2}(T))}^2 +  \|\nabla \tilde{v}\|_{L^2(U_k(T))}^2\\
\nonumber &\lesssim& \|\nabla p_h^T\|_{L^2(\Omega \setminus U_k(T))}^2 +  \|\nabla p_h^T\|_{L^2(U_{k+1}(T) \setminus U_{k-2}(T))}^2 \\
\nonumber &\enspace& \quad +\|\nabla \Ic \mathcal{I}_h((1-\eta_{T,k,1})p_h^T)\|_{L^2(U_k(T))}^2\\
\nonumber &\overset{(\ref{lemma-a-3-proof-eq-1})}{\lesssim}& \|\nabla p_h^T\|_{L^2(\Omega \setminus U_{k-2}(T))}^2 + \|\nabla \Ic \mathcal{I}_h((1-\eta_{T,k,1})p_h^T)\|_{L^2(U_k(T) \setminus U_{k-2}(T))}^2\\ 
\nonumber &\lesssim& \|\nabla p_h^T\|_{L^2(\Omega \setminus U_{k-3}(T))}^2 \\
\nonumber &\overset{(\ref{lemma-a-2-eq})}{\lesssim}& \theta^{2 (k-3)} \|\nabla p_h^T \|_{L^2(\Omega)}^2.\\
\label{lemma-a-3-proof-eq-3}
\end{eqnarray}
Combining (\ref{lemma-a-3-proof-eq-0}) and (\ref{lemma-a-3-proof-eq-3}) proves the lemma.
\end{proof}

\end{document}